\documentclass[a4paper,10pt]{article}

\usepackage[margin=3cm]{geometry}
\usepackage[normalem]{ulem}
\usepackage{amssymb,amsbsy,amsmath,amsfonts,amssymb,amscd,amsthm}
\usepackage{xcolor}
\usepackage{booktabs}
\usepackage{bbm}
\usepackage{url}
\usepackage{float}
\usepackage[ colorlinks=true, linkcolor=black, citecolor=black, urlcolor=black ]{hyperref} 
\definecolor{bibblue}{RGB}{0,0,180} \newcommand{\biblink}[2]{
 \href{#1}{{\color{bibblue}#2}}
}

\newcommand{\R}{\mathbb{R}}

\newcommand{\E}{\mathbb{E}}
\newcommand{\bS}{\mathbb{S}}

\usepackage{graphicx}
\graphicspath{{./}{figures/}}
\usepackage{algorithm}
\usepackage{algpseudocode}

\newcommand{\beq}{\begin{equation}}
\newcommand{\eeq}{\end{equation}}
\def\a{\alpha}

\def\g{\gamma}

\def\n{\nu}

\def\e{\varepsilon}

\def\pd{\partial}

\newcommand{\cN}{{\cal N}}

\newcommand{\cL}{{\cal L}}

\newcommand{\diver}{{\rm div}}
\newcommand{\Var}{{\rm Cov}}

\newtheorem{theorem}{Theorem}[section]

\newtheorem{proposition}[theorem]{Proposition}

\newtheorem{remark}[theorem]{Remark}

\numberwithin{equation}{section}

\titlepage
\title{A Mean Field Games Perspective on Evolutionary Clustering}
\date{}
		
\author{Alessio Basti\footnote{Dip. di Ingegneria e Geologia, Univ. ``G. D'Annunzio'' Chieti-Pescara,
		viale Pindaro 42, 65127 Pescara (Italy), { \textsc alessio.basti@unich.it, fabio.camilli@unich.it.}} \and
 Fabio Camilli$^\ast$ \and 
		Adriano Festa \footnote{Dip. di Scienze Matematiche, Politecnico di Torino,
		corso Duca degli Abruzzi , 24, 10129 Torino (Italy), { \textsc adriano.festa@polito.it}}}

\begin{document}
\maketitle

\begin{abstract}
We propose a control-theoretic framework for evolutionary clustering based on quasi-stationary Mean Field Games. Each cluster is represented by a probability density whose evolution is governed by a Fokker--Planck equation, while the associated velocity field is determined through a stationary Hamilton--Jacobi equation. The general formulation does not prescribe a finite-dimensional statistical shape for the component densities, although the number of components is fixed. In the Gaussian specialization, we show that suitable affine dynamics reproduce the mean and covariance trajectories generated by the classical Expectation--Maximization procedure. To improve temporal coherence in the presence of noise and temporary cluster overlaps, we introduce causal and non-causal time-averaged log-likelihood objectives. We also develop a fully density-based numerical implementation for non-Gaussian components. The proposed formulations are assessed on synthetic and real time-dependent datasets and compared with independent snapshot Expectation--Maximization
and with the same method applied to temporally smoothed observations. In the two-dimensional benchmark, an established
evolutionary \(k\)-means method is also included as an external dynamic-clustering baseline.
\end{abstract}

\noindent
 {\footnotesize \textbf{Keywords:} Evolutionary clustering;  mean field games; dynamic population tracking; ecological movement modelling}.

\section{Introduction}

Clustering is a central task in unsupervised learning, where the objective is to identify coherent subpopulations
within a dataset: observations assigned to the same group should exhibit a high degree of similarity, whereas
observations belonging to different groups should remain distinguishable. A classical probabilistic approach is based
on finite mixture models, in which the observed distribution is represented as a weighted combination of component
distributions, most commonly Gaussian ones. The corresponding parameters are frequently estimated through the
Expectation--Maximization (EM) algorithm, which provides both a soft assignment of the observations to the
components and a probabilistic interpretation of the resulting clusters; see, for example, \cite{Bishop,bilmes}.

Most classical clustering techniques are designed for static datasets. In many applications, however, the observed
distribution changes over time, and the relevant task is not only to identify clusters at individual time instants but also to
track their evolution while preserving temporal coherence. Examples arise whenever populations, spatial distributions,
or patterns of activity evolve over successive observations. Applying a static clustering method independently at
each time may lead to unstable assignments, abrupt changes in the inferred component parameters, or exchanges of
component labels when different populations approach or overlap.

Several evolutionary and dynamic clustering approaches have been proposed to address these issues, including
repeated clustering with temporal penalties, evolutionary regularization, and methods that combine snapshot accuracy
with historical consistency \cite{aynaud2010static,chakrabarti2006,chi2007evolutionary,xu2014dynamic}.
The literature has broadened considerably in recent years. A recent review of clustering evolution in
temporal and streaming data distinguishes, among others, evolutionary clustering, self-organizing approaches,
heuristic procedures, and online stream-clustering methods \cite{atif2023review}. Within the classical evolutionary
clustering paradigm, temporal coherence is introduced by balancing the quality of the current partition against a
history-dependent cost, as in Evolutionary \(k\)-means \cite{chakrabarti2006}, or by adding temporal smoothness to
spectral clustering objectives \cite{chi2007evolutionary}. Other approaches are tailored to specific data structures, such
as dynamic stochastic block models for evolving networks \cite{xu2014dynamic}.

More recent developments place increasing emphasis on non-stationarity and concept drift. For example,
StreamDD combines stream clustering with explicit drift detection \cite{hamdad2024streamdd}, while STM-Stream uses
a sliding-window short-term memory mechanism to preserve information from previous data distributions while adapting
to evolving streams \cite{liang2025stmstream}. In a spatial--temporal setting, Portugal et al.~\cite{portugal2024graph}
construct clusters at successive timestamps and represent their evolution through a graph linking related clusters.
Recent work has also emphasized the need for validation criteria that remain meaningful under concept drift
\cite{iglesias2024temporal}, while the dynamic-community literature has increasingly incorporated heuristic,
metaheuristic, deep-learning, and hybrid methods for time-evolving graphs; see the recent review
\cite{saeed2025dynamic}.

These families address different versions of the dynamic clustering problem and are not interchangeable.
The approaches most directly related to evolutionary clustering are typically formulated through discrete-time
snapshot updates, temporal penalties, online summaries, or explicit links between partitions obtained at successive
times. Stream-clustering methods additionally emphasize one-pass processing, limited memory, concept drift, and in
many cases the appearance or disappearance of clusters. Such features are important but differ from the setting
considered here, where the observed object is a time-dependent density \(f(\cdot,t)\) and the number of latent
components is fixed. The aim is therefore not to replace existing stream- or graph-clustering techniques, but to provide
a complementary continuous-density formulation in which temporal evolution is part of the clustering model itself.

Motivated by recent connections between machine learning, dynamical systems, differential geometry,
and optimal control \cite{chaudari,chen,e,hl,hpv,mc}, evolutionary clustering is investigated here from the perspective
of Mean Field Games (MFG).
Mean Field Games were introduced independently by Lasry and Lions \cite{ll} and by Huang et al.~\cite{hmc} to
describe the collective behaviour of a large number of interacting agents. In the mean-field limit, the optimization
problem of a representative agent is encoded by a Hamilton--Jacobi equation, while the evolution of the population
distribution is governed by a Fokker--Planck equation. The coupling between these equations describes the interaction
between individual decisions and the macroscopic population state. Beyond their original applications in economics
and control, mean-field models have increasingly been connected with machine learning, optimal transport, and
large-scale optimization, where probability distributions evolve according to variational or control-theoretic
principles \cite{accd,clz,cf,pequito,zk}.

Mean-field and MFG ideas have already been used in clustering. Pequito et al.~\cite{pequito} proposed
an MFG formulation for unsupervised learning of finite mixture models, Aquilanti et al.~\cite{accd} developed a
multi-population MFG approach to cluster analysis that can be interpreted as a continuous version of EM, and Herty
et al.~\cite{hpv} derived mean-field clustering models from interacting-particle dynamics. In these works, the
continuous dynamics primarily provide an algorithmic or population-dynamical mechanism for solving a clustering
problem associated with a given dataset. The present problem is different: the observed density itself varies with the
physical time \(t\), and the objective is to follow the latent components coherently along this prescribed temporal
sequence. Thus, the novelty is not the use of a continuous model for clustering per se, but its use as an evolutionary
clustering mechanism for a time-dependent observed density.

Within this framework, evolutionary clustering is interpreted as a controlled evolution of several interacting
probability densities. Each component of the mixture represents a population whose density changes through a
Fokker--Planck equation, while its velocity field is obtained from an associated stationary Hamilton--Jacobi problem.
The observed time-dependent density enters the model through data-fidelity and assignment terms, whereas the
controlled dynamics provide a continuous mechanism for propagating the cluster densities in time.
In contrast with temporal penalties acting only on successive partitions or centroids, temporal
coherence is therefore encoded directly in the evolution equation for each component density, while the associated
Hamilton--Jacobi problem determines the velocity field through an optimality principle.

The general formulation is non-parametric with respect to the shapes of the component densities: no prescribed
finite-dimensional statistical family is imposed on them, although the number of components is assumed to be fixed.
This distinction is important. The paper does not propose a non-parametric estimator of the number of clusters;
rather, it develops a density-based evolution framework in which the individual components may assume non-Gaussian
shapes. The Gaussian setting is subsequently studied as a tractable specialization that permits a precise comparison
with classical mixture-model estimation.

More specifically, a quasi-stationary MFG formulation is considered.
At each physical time, agents optimize with
respect to the current population configuration rather than anticipating its complete future evolution. This assumption
leads to a sequence of stationary control problems coupled with time-dependent Fokker--Planck equations and is
particularly suitable for sequentially observed or slowly varying data. In the Gaussian case, the quadratic value-function
ansatz generates affine velocity fields. By selecting their coefficients through suitable moment-matching conditions,
the resulting Fokker--Planck dynamics recover the means and covariance matrices associated with the corresponding
EM updates. The role of this specialization is therefore twofold: it provides an analytically accessible validation of
the MFG construction and establishes a continuous-time connection with a standard clustering methodology.

Independent snapshot estimation remains sensitive to temporary overlaps, observational noise, and rapid variations
of the data. To improve temporal coherence, time-averaged log-likelihood objectives are also introduced.
A causal asymmetric kernel averages information from past observations and can therefore be used in sequential or
online settings. A symmetric kernel incorporates observations from both sides of the current time and is instead intended
for offline reconstruction and post-processing. These formulations regularize the inferred weights, centres, and covariance
matrices directly, rather than merely smoothing the observed density before applying a static clustering procedure.

The main contributions of the present work can be summarized as follows:
\begin{itemize}
    \item Evolutionary clustering is formulated as a quasi-stationary multi-population MFG system
    coupling stationary Hamilton--Jacobi equations with time-dependent Fokker--Planck equations.

    \item A Gaussian specialization is derived in which affine controlled dynamics reproduce the EM
    moment trajectories, thereby providing a continuous population-dynamics interpretation of the classical estimation
    procedure.

    \item Causal and non-causal time-averaged log-likelihood formulations are introduced to reduce
    abrupt variations and component exchanges during temporary overlaps.

    \item Both a finite-dimensional Gaussian implementation and a fully density-based numerical method
    are developed, the latter allowing component shapes that are not prescribed in advance.

    \item The proposed formulations are compared with independent snapshot EM, EM applied to
    temporally smoothed observations, and an established Evolutionary \(k\)-means baseline
    \cite{chakrabarti2006}, assessing reconstruction accuracy, component tracking, temporal regularity, and
    computational cost.

    \item The approach is tested on non-Gaussian synthetic examples, a two-dimensional evolutionary
    clustering benchmark, and a real time-dependent dataset describing seasonal humpback-whale distributions.
\end{itemize}

The numerical experiments are intended to clarify the different roles of statistical estimation, temporal regularization,
and controlled density evolution. The comparisons show that smoothing the observed data may improve the
reconstruction of the total distribution, but does not by itself guarantee coherent trajectories for the individual
components. The evolutionary formulations instead regularize the latent clustering variables directly.
The two-dimensional benchmark additionally compares the proposed formulations with Evolutionary
\(k\)-means, providing a direct numerical comparison with a classical history-regularized evolutionary clustering
method.
The fully density-based MFG model is more computationally demanding than its Gaussian counterparts, but it can
represent component geometries that cannot be captured accurately by a prescribed Gaussian family. The results
therefore highlight a trade-off between computational efficiency, data fidelity, temporal coherence, and flexibility of
the component representation.

The remainder of the paper is organized as follows. Section~\ref{sec:MFG} introduces the general quasi-stationary
MFG framework for the evolution of multiple component densities. Section~\ref{sec:EM_model} studies the
Gaussian specialization, establishes its relationship with the EM moment updates, and derives the causal and symmetric
time-averaged formulations. Section~\ref{sec:num_examples} presents the numerical methods and comparative experiments,
including non-Gaussian synthetic densities, the two-dimensional benchmark, and the real-data application. The final
section summarizes the main findings, discusses the limitations of the proposed formulations, and outlines directions
for further theoretical and computational developments.

\section{Non-Parametric Mean Field Games Framework}\label{sec:MFG}
We represent the dataset at time $t \in [0,T]$ as a probability density $f(\cdot,t)$ mapping $\mathbb{R}^d$ into $(0,\infty)$.  The mixture $m$ is composed of $K$ components expressed as
\begin{equation}\label{emmfg_mix}
m(x,t) = \sum_{k=1}^K \alpha_k(t) m_k(x,t), \qquad \alpha(t) = (\alpha_1(t), \dots, \alpha_K(t)) \in \mathcal T_K,
\end{equation}
where $m_k$ are the cluster densities and $\mathcal T_K$ is the unit simplex. The weights $\alpha_k(t)$ describe the proportion of the total population associated with the $k$-th component at time $t$. 
We highlight that the reference density $f(x,t)$ is assumed to be given and represents the observed data distribution at time $t$, while the mixture $m(x,t)$ approximates $f$ by evolving according to controlled dynamics.\\
By Bayes' theorem, we introduce the probabilities $\gamma_k$ (also known as \textit{responsibilities}) that each data point belongs to a given latent component $k$ of the mixture, indicating how much each component contributes to generating that observation, i.e. 
\begin{equation}\label{emmfg_resp}
    \gamma_k(x,t)=\dfrac{\alpha_k(t) m_k(x,t)}{m(x,t)},\qquad x\in\mathbb{R}^d,\,t\in [0,T],\, k=1,\dots,K.
\end{equation}
These responsibilities can be used to divide the dataset into clusters by setting:
\begin{equation*} 
    C_k(t) = \left\{ x \in \mathbb{R}^d \;\middle|\; \gamma_k(x,t) = \max_{j=1,\dots,K} \gamma_j(x,t) \right\}.
\end{equation*}
To compute the densities $m_k$ and the weights $\alpha_k$, we consider a \emph{quasi-stationary} MFG. In contrast to standard MFGs, where agents anticipate the future evolution of the population, the quasi-stationary setting assumes agents react only to the current population state. This makes the model particularly suitable for online or slowly varying data (see \cite{mouz}).
In this setting, each data point is viewed as a controlled stochastic agent. The state $X_t$ of an agent in the $k$-th cluster evolves according to:
\begin{equation}\label{eq:dynamics}
dX_t =a(X_t)\,dt + \sqrt{2\varepsilon}\,dW_t, \quad X_0 = x,
\end{equation}
where $a$ is the control, $\varepsilon > 0$ is the diffusion coefficient, and $W_t$ denotes a standard Wiener process. We assume that, at each time instant $t$, the agent belonging to the $k$-th population optimizes a cost functional $J_k(x,a;t)$ using the information available at the present time, where
\begin{equation}\label{eq:ergodic_cost}
J_k(x,a;t) = \lim_{S \to +\infty} \frac{1}{S} \mathbb{E}_x \left[ \int_t^S \left( L(\tilde X_s, a(\tilde X_s)) + F_k(\tilde X_s, m(\cdot,t)) \right) ds \right],
\end{equation}
where $L$ is a Lagrangian cost, and $F_k$ is the interaction term coupling the agent's state with the distribution of the population, $\tilde X$ refers to a (fictitious) dynamics of type \eqref{eq:dynamics} starting at time $t$. The optimal control $a$ obtained by optimizing this functional is then incorporated into dynamics \eqref{eq:dynamics} \cite{mouz}. Note that, in \eqref{eq:ergodic_cost}, the distribution $m$ is frozen at the current time  $t$. Therefore, agents do not anticipate its future evolution, but instead optimize their configuration based solely on the information available at that time.
\\ 
To bridge the microscopic dynamics with the macroscopic description, we introduce the Hamiltonian $H$ via the Legendre transform of the Lagrangian:
\begin{equation*}
H(x, p) = \sup_{a \in \mathbb{R}^d} \left\{ -p \cdot a - L(x, a) \right\}.
\end{equation*}
Under the quasi-stationary assumption, agents solve a stationary optimization problem at each time $t$, while the densities $m_k$ evolve according to the optimal drift $-\partial_p H(x, Du_k)$, where the notation $\partial_p H$ denotes the derivative of $H$ with respect to the momentum $p$. The resulting coupled system is given by:
\begin{equation}\label{eq:MFG_full}
\left\{
\begin{array}{ll}
-\varepsilon\Delta u_k(x,t) + H\big(x,Du_k(x,t)\big) + \lambda_k(t) = F_k\big(x,m(\cdot,t)\big),\\[6pt]
\partial_t m_k(x,t) - \varepsilon\Delta m_k(x,t) - \operatorname{div}\!\big(m_k(x,t)\,\partial_p H(x,Du_k(x,t))\big)=0,\\[6pt]
\alpha_k(t) = \int_{\mathbb{R}^d} \gamma_k(x,t)\,f(x,t)\,dx,\\[6pt]
m_k(x,0)=m_{k,0}(x),\qquad \min_{x \in \R^d} u_k(x,t)=0,
\end{array}
\right\}_{\forall\,k=1,\dots,K}
\end{equation}
where $u_k$ is the ergodic value function, and $\lambda_k(t)\in\mathbb{R}$ is the ergodic constant. It is worth noticing that the responsibilities $\gamma_k$ couple the sub-populations to the empirical data density $f$ and to the full mixture $m$.

The coupling $F_k$ determines the qualitative behavior of the clustering dynamics and can encode, for example, entropy regularization, data fidelity of the mixture $m$ to the observed data $f$, and repulsion/attraction between components. A meaningful information-theoretic choice is a convex combination of an entropy penalization and a Kullback-Leibler-type fidelity term:
\begin{equation} \label{eq_KL}
F_k(x,m) = \underbrace{\eta_k \log\big(m_k(x,t) \big)}_{\text{Entropic penalization}} + \underbrace{(1-\eta_k) \alpha_k(t) \log\left( \frac{m(x,t)}{f(x,t)} \right)}_{\text{Data fidelity}},\quad \eta_k\in[0,1]. 
\end{equation}
Specifically, the terms in $F_k$ correspond to the first variational derivative with respect to $m_k$ of the negative Shannon entropy $\mathcal{H}(m_k) = \int m_k \log(m_k) dx$ and the Kullback-Leibler divergence $\text{KL}(m \| f) = \int m \log(m/f) dx$. These derivatives are considered up to additive constants, which are absorbed by the ergodic constant $\lambda_k(t)$ in the Hamilton-Jacobi-Bellman equation of system \eqref{eq:MFG_full}.

We do not address the existence of solutions to the nonlinear system \eqref{eq:MFG_full}, as our focus is primarily on
computational aspects. Nevertheless, such a result could, in principle, be
established via a fixed-point argument (cf. \cite{mouz}). All the subsequent analytical statements concerning system \eqref{eq:MFG_full} are therefore understood conditionally on the existence of sufficiently regular solutions.

\section{Fokker--Planck Equation and the Gaussian Case}\label{sec:EM_model}
To bridge our MFG perspective with the standard Gaussian EM algorithm, we consider a specialized quadratic coupling term that acts as a dynamic attractor:
\begin{equation*} 
    F_k(x, m)=\frac{1}{2} (x-M_k(t))^\top V_k^2(t)(x-M_k(t)),
\end{equation*}
for a suitable positive-definite symmetric matrix $V_k(t)$. For the quadratic Hamiltonian $H(x,p)=\tfrac{1}{2}|p|^2$, the ansatz
\begin{equation}\label{eq:u_quadratic}
u_k(x,t)=\tfrac{1}{2}\,(x-M_k(t))^\top V_k(t)\,(x-M_k(t))
\end{equation}
gives the linear drift
\begin{equation}\label{eq:drift}
B_k[m](x,t):=\partial_p H(x,Du_k)=Du_k=V_k(t)\big(x-M_k(t)\big).
\end{equation}
With this drift, the second equation in \eqref{eq:MFG_full} becomes a Fokker--Planck (FP) equation with affine drift.  The MFG approach thus gives rise to the following system
\begin{equation}\label{emMFG}
\left\{
\begin{array}{ll}
		\pd_t m_k(x,t)-	\e    \Delta m_k(x,t)-\diver(m_k(x,t)B_k[m](x,t) )=0, \\[8pt]
		\a_k(t)=\int_{\R^d }\g_k(x,t) f(x,t)dx, \\[8pt]
	m_k(x,0)=m_{k,0} (x)
\end{array}
\right\}_{\forall\,k=1,\dots,K}
\end{equation}
for $(x,t)\in\R^d\times [0,T]$.  For mathematical consistency, we assume finite first and second moments:
\[
\int_{\mathbb{R}^d}|x|\,f(x,t)\,dx \le C_f, \qquad \int_{\mathbb{R}^d}|x|^2\,f(x,t)\,dx \le C_f,
\]
for a constant $C_f > 0$.

To clarify better the procedure, substituting the quadratic ansatz into the stationary Hamilton--Jacobi equation shows that it is satisfied for $\lambda_k(t)=\varepsilon\operatorname{tr}V_k(t)$. Thus, in the Gaussian specialization, the Hamilton--Jacobi equation provides the control interpretation of the affine velocity field but does not determine $M_k$ and $V_k$; these quantities are selected below through the conditions that match the Gaussian expectation-maximization moments, as, in the following proposition, we prove that a solution of \eqref{emMFG} at time $t$, $m_k(\cdot,t)$,  is a Gaussian density $\cN(x|\nu_k(t), T_k(t))$ explicating the dependence of the parameter on $M_k$, $V_k$.
\begin{proposition}\label{prop: explicit_sol}
	Let $m$ in \eqref{emmfg_mix} be a mixture with $(m_k,\alpha_k)$, $k=1,\dots,K$, given by   the  system \eqref{emMFG}. Assume that, for every $k=1,\dots,K$, the initial density $m_{k,0}$ is Gaussian. Then, for each time $t$,  $m(\cdot,t)$ is a Gaussian mixture with    
	\begin{equation*} 
		m_k(x,t)= c_k(t)\,\exp\Bigl(-\frac{1}{2}(x-\nu_k(t))^\top \,T_k(t)^{-1}(x-\nu_k(t))\Bigr)
	\end{equation*}
	where
	\begin{equation*} 
		\begin{cases}
			\nu'_k(t)= -V_k(t)\Bigl(\nu_k(t)-M_k(t)\Bigr),\\[1mm]
			{T'_k(t)= 2\e\,I - T_k(t)\,V_k(t)-V_k(t)\,T_k(t)},\\[1mm]
			c_k(t)= \frac{1}{\sqrt{(2\pi)^d \det( T_k(t))}}.
		\end{cases}
	\end{equation*}
\end{proposition}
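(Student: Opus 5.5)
The plan is to treat each equation in \eqref{emMFG} separately. For fixed $k$, the drift $B_k[m](x,t)=V_k(t)(x-M_k(t))$ is affine in $x$, and its coefficients $V_k(t)$, $M_k(t)$ depend only on $t$: the coupling through $m$ enters only via how these coefficients are selected. So for given time-dependent coefficients, the Fokker--Planck equation for $m_k$ is a linear equation with affine drift, an Ornstein--Uhlenbeck type equation. The goal is a Gaussian ansatz solution plus uniqueness.

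First, I would take the ansatz $m_k(x,t)=\mathcal N(x\,|\,\nu_k(t),T_k(t))$, with $\nu_k(0)$ and $T_k(0)$ the mean and covariance of the Gaussian initial datum $m_{k,0}$. I would then check that it solves the FP equation when $\nu_k$ and $T_k$ satisfy the stated ODEs. Write $y=x-\nu_k$, $P=T_k^{-1}$ and $m_k=c_k e^{-\frac12 y^\top P y}$, and use $c_k'/c_k=-\frac12\operatorname{tr}(T_k^{-1}T_k')$ and $P'=-PT_k'P$. Then:
\begin{itemize}
\item the time derivative $\partial_t m_k/m_k$ is a quadratic polynomial in $y$;
\item so is $\varepsilon\Delta m_k/m_k=\varepsilon(y^\top P^2y-\operatorname{tr}P)$;
\item so is $\operatorname{div}(m_kV_k(x-M_k))/m_k=\operatorname{tr}V_k-y^\top PV_k(y+\nu_k-M_k)$.
\end{itemize}
Matching the quadratic, linear and constant coefficients in $y$ gives three conditions:
\begin{itemize}
\item the quadratic terms give $\frac12 P T_k' P=\varepsilon P^2-\frac12(PV_k+V_kP)$, i.e.\ after multiplying by $T_k$ on both sides, $T_k'=2\varepsilon I-T_kV_k-V_kT_k$;
\item the linear terms give $P\nu_k'=-PV_k(\nu_k-M_k)$;
\item the constant terms reduce to $-\frac12\operatorname{tr}(PT_k')=\varepsilon\operatorname{tr}P-\operatorname{tr}V_k$, which follows from the covariance equation by taking the trace after multiplying by $P$.
\end{itemize}
The symmetrization of $PV_k$ in the quadratic form is the only point needing care.

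Alternatively, and more cleanly, I could use the moment method. Multiplying the FP equation by $x$ and by $(x-\nu_k)(x-\nu_k)^\top$ and integrating by parts, using the finite-moment assumptions to discard boundary terms, directly gives the mean and covariance ODEs. Gaussianity itself can be seen from the Fourier transform: the transformed equation is a first-order linear PDE whose characteristic solution keeps $\hat m_k$ the exponential of a quadratic form in $\xi$. I would mention this as a cross-check.

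Next, I need $T_k(t)$ to stay symmetric positive definite, so that the formula makes sense. The Lyapunov-type ODE preserves symmetry. Its solution is $T_k(t)=\Phi(t)T_k(0)\Phi(t)^\top+2\varepsilon\int_0^t\Phi(t)\Phi(s)^{-1}\Phi(s)^{-\top}\Phi(t)^\top ds$, with $\Phi'=-V_k\Phi$ and $\Phi(0)=I$. This expression is positive definite, since the first term is.

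Finally, uniqueness of solutions to the linear FP equation with affine drift, within the class of probability densities with finite second moments, identifies $m_k$ with the Gaussian ansatz. I would invoke standard uniqueness for linear Fokker--Planck equations with Lipschitz drift, or the Fourier argument. Then $m=\sum_k\alpha_k m_k$ is a Gaussian mixture for any weights $\alpha_k(t)\in\mathcal T_K$; the equation for $\alpha_k$ plays no role in the shape. I expect the main obstacle to be logical rather than computational: $M_k$ and $V_k$ may themselves depend on $m$. I would handle this by stating the result conditionally, as the paper does: given the continuous coefficient paths produced by the coupled solution, the linear argument applies pathwise.
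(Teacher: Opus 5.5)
Your Gaussian-ansatz verification is exactly the paper's proof: the equation divided by $m_k$ is split into quadratic, linear and constant terms in $x-\nu_k$, and the constant term is closed via $\frac{d}{dt}\ln\det T_k=\operatorname{tr}(T_k^{-1}T_k')$, up to a sign slip: the constant terms give $-\frac12\operatorname{tr}(PT_k')=\operatorname{tr}V_k-\varepsilon\operatorname{tr}P$, which is what your own trace computation produces. Your extra steps, positive definiteness of $T_k(t)$ from the variation-of-constants formula and uniqueness for the linear Fokker--Planck equation with affine drift, are not in the paper (which only checks that the ansatz is a solution), and they are what is needed to conclude that the solution $m_k$ of \eqref{emMFG} is itself this Gaussian for all $t$.
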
 
The proof of the previous proposition can be found in the \ref{app:proofs}.\\
We define
\begin{align}
	&\mu_k(t)=\frac{\int_{\R^d} x\, \gamma_{k}(x,t) f(x,t)\, dx}{\int_{\R^d} \gamma_{k}(x,t) f(x,t)\, dx} \in \R^d, \quad & t\in [0,T], \label{emmfg_mean}\\[2mm]
	&\Sigma_k(t)=\frac{\int_{\R^d} (x-\mu_k(t))(x-\mu_k(t))^\top \gamma_{k}(x,t) f(x,t)\, dx}{\int_{\R^d} \gamma_{k}(x,t) f(x,t)\, dx} \in \bS^d_+, & t\in [0,T], \label{emmfg_var}\\[1mm]
	&m_{k,0}(x)=\mathcal{N}(x \mid \mu_k(0), \Sigma_k(0)). \label{initial_datum}
\end{align}

Here, $\mu_k(t)$ and $\Sigma_k(t)$ are the mean and covariance of the $k$-th component at time $t$, computed as weighted averages of the reference density $f(x,t)$. 
In other words, $\mu_k(t)$ is the expected value of $x$ if we focus only on component $k$, and $\Sigma_k(t)$ measures how spread out the data is around this mean. We distinguish the actual moments $(\nu_k,T_k)$ of the evolving Fokker--Planck density from the responsibility-weighted target moments $(\mu_k,\Sigma_k)$ extracted from the observations. The following results select $M_k$ and $V_k$ so that the two pairs coincide.

The choice of $m_k(x,0)$ reflects the assumption that the initial distribution $m(0)$ is itself a Gaussian mixture approximating the dataset $f(x,0)$.

In the models that follow, the vector $M_k(t)\in\R^d$ and positive definite symmetric matrix $V_k(t)$ in \eqref{eq:drift} depend on $\gamma_k(t)$, $\mu_k(t)$, and $\Sigma_k(t)$, and therefore on the overall density $m$. They are chosen to ensure that the mixture $m$ in \eqref{emmfg_mix} remains a Gaussian mixture that maximizes a log-likelihood functional based on the data. 
In the next subsections, we present two models that aim to maximize the log-likelihood while keeping each cluster stable and well-defined over time.

\subsection{Instantaneous log-likelihood model}\label{sec:first_model}
In this section, we consider a model that at each time maximizes the Gaussian log-likelihood functional  
\begin{equation}
	\label{log_lik}
	\cL(\{\beta_k, \rho_k, \Delta_k\}_{k=1}^K \,\big|\, f(t)) = \int_{\R^d} \sum_{k=1}^{K} \gamma_k(x,t) \ln\left(\beta_k  \cN(x|\rho_k , \Delta_k )\right) f(x,t)\,dx,
\end{equation}
with respect to the parameters   
$\beta_k >0$, $\sum_{k=1}^K\beta_k=1$, 
$\rho_k  \in\mathbb{R}^d$, and $\Delta_k$ (as a positive-definite symmetric matrix), where the responsibilities $\gamma_k(x,t)$ are given. The previous functional measures how well the mixture model explains the observed data and it is suitable for the application of EM algorithm (cf. \cite{Bishop}).
\begin{proposition}\label{prop:log_lik_1}
	For any $t\in [0,T]$, assume that $\alpha_k(t)>0$ and $\Sigma_k(t)\in\mathbb{S}_{++}^d$
for every \(k=1,\dots,K\). Then the unique maximizer of $\cL$  in \eqref{log_lik} is given by
	$\beta_k  =\alpha_k(t)$, $\rho_k=\mu_k(t)$ and  $\Delta_k=\Sigma_k(t)$, where $\alpha_k$, $\mu_k$ and $\Sigma_k$   are defined as in \eqref{emMFG}, \eqref{emmfg_mean} and \eqref{emmfg_var}.
\end{proposition}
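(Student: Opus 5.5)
Since the responsibilities $\gamma_k(x,t)$ are frozen, the functional \eqref{log_lik} separates into independent pieces:
\[
\cL=\sum_{k=1}^K\Big(\ln\beta_k\int_{\R^d}\gamma_k f\,dx\Big)+\sum_{k=1}^K\int_{\R^d}\gamma_k(x,t)\ln\cN(x|\rho_k,\Delta_k)f(x,t)\,dx .
\]
The first sum depends only on $\beta$, and each term of the second only on $(\rho_k,\Delta_k)$. We can therefore maximize each group separately and then combine the unique maximizers. Write $N_k:=\int\gamma_k f\,dx=\alpha_k(t)$, using the definition in \eqref{emMFG}. All integrals are finite by the moment bounds on $f$ and $0\le\gamma_k\le1$.

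\emph{Weights.} We maximize $\sum_k\alpha_k\ln\beta_k$ over the open simplex. Since $\sum_k\gamma_k=1$, we have $\sum_k\alpha_k=\int f=1$. Gibbs' inequality then gives
\[
\sum_k\alpha_k\ln\beta_k-\sum_k\alpha_k\ln\alpha_k=-\mathrm{KL}(\alpha\|\beta)\le0,
\]
with equality iff $\beta=\alpha$, because all $\alpha_k>0$. This yields $\beta_k=\alpha_k(t)$ uniquely. Alternatively one can use a Lagrange multiplier together with strict concavity of $\ln$.

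\emph{Means and covariances.} For fixed $k$ we write
\[
\int\gamma_k\ln\cN(x|\rho,\Delta)f\,dx=-\tfrac{N_k}{2}\Big(d\ln2\pi+\ln\det\Delta+\tfrac{1}{N_k}\int(x-\rho)^\top\Delta^{-1}(x-\rho)\gamma_kf\,dx\Big).
\]
By the bias--variance identity, using \eqref{emmfg_mean} and \eqref{emmfg_var}, the weighted quadratic term equals
\[
\operatorname{tr}(\Delta^{-1}\Sigma_k)+(\rho-\mu_k)^\top\Delta^{-1}(\rho-\mu_k).
\]
Since $\Delta^{-1}$ is positive definite, for every fixed $\Delta$ the maximum is attained uniquely at $\rho=\mu_k(t)$. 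It then remains to minimize
\[
g(\Delta)=\ln\det\Delta+\operatorname{tr}(\Delta^{-1}\Sigma_k)\quad\text{over }\mathbb{S}^d_{++}.
\]
Set $A=\Sigma_k^{-1/2}\Delta\Sigma_k^{-1/2}\in\mathbb{S}^d_{++}$, which is legitimate because $\Sigma_k\in\mathbb{S}^d_{++}$ by hypothesis. Then
\[
g(\Delta)=\ln\det\Sigma_k+\sum_i\big(\ln a_i+a_i^{-1}\big),
\]
where $a_i>0$ are the eigenvalues of $A$. The scalar function $s\mapsto\ln s+1/s$ has a unique minimizer at $s=1$. Hence the minimum is attained iff all $a_i=1$, i.e.\ $A=I$, i.e.\ $\Delta=\Sigma_k(t)$, and it is unique.

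The step needing the most care is this covariance minimization. We avoid differentiating with respect to a symmetric matrix by reducing, via $A=\Sigma_k^{-1/2}\Delta\Sigma_k^{-1/2}$, to the scalar eigenvalue inequality $\ln s+1/s\ge1$. This reduction also delivers uniqueness and existence of the global maximum directly, with no separate coercivity argument. Combining the three unique maximizers gives the claim.
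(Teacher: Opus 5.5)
Your proof is correct, and it takes a genuinely different route from the paper. The paper gives no separate argument for this statement. It refers to Bishop and obtains it as the case $g=\delta_0$ of the proof of Proposition~\ref{prop:log_lik2}. There, a Lagrange multiplier is introduced for $\sum_k\beta_k=1$, and the first-order conditions in $\beta_k$, $\rho_k$ and $\Delta_k$ are written down and solved explicitly; the $\Delta_k$ condition uses the Matrix Cookbook formulas for the derivatives of $\ln\det\Delta_k$ and of $(x-\rho_k)^\top\Delta_k^{-1}(x-\rho_k)$. Uniqueness of the covariance is then asserted from strict concavity in the precision matrix $P_k=\Delta_k^{-1}$. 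For $g=\delta_0$ the remainder $R_k$ vanishes and one recovers $\Delta_k=\Sigma_k(t)$.

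You instead replace stationarity by global inequalities with explicit equality cases: Gibbs' inequality for the weights, the bias--variance identity for the means, and the whitening $A=\Sigma_k^{-1/2}\Delta\Sigma_k^{-1/2}$, which reduces the covariance step to $\ln s+1/s\ge 1$. As a result, existence, global optimality and uniqueness come out at once. You need no differentiation on the cone of symmetric matrices. You also avoid the step ``a critical point of a concave function is its unique maximizer,'' which the paper leaves implicit for $\beta_k$ and $\rho_k$. Your profile argument (eliminate $\rho$ for each fixed $\Delta$, then optimize over $\Delta$) makes explicit a reduction that the paper uses tacitly, since its concavity remark is stated with the mean already fixed.

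What the paper's computation buys is that it is carried out once for a general temporal kernel and directly exhibits the exact time-averaged covariance maximizer $\widetilde\Sigma_k+R_k$. Your argument would extend to that setting as well. You would apply the same two steps to the space--time weight $g(t-s)\gamma_k(x,s)f(x,s)$, with $\Sigma_k$ replaced by the weighted scatter about $\widetilde\mu_k(t)$. That scatter equals $\widetilde\Sigma_k(t)+R_k(t)$ by the law of total covariance, and it must be assumed positive definite, as the paper does.
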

The proof of the previous proposition is classical and can be found in \cite{Bishop} since the time plays no role.
In the next proposition we show that, for appropriately chosen $M_k(t)$, $V_k(t)$, the parameters of the mixture components $m_k$ of $m$ coincide at each time with the maximizer of the functional $\cL$ in \eqref{log_lik} (for the proof, see the \ref{app:proofs}). 
\begin{proposition}\label{prop:equal_data}
Let \((m_k,\alpha_k)\), \(k=1,\dots,K\), be a solution of \eqref{emMFG}. Assume that \(\mu_k\in C^1([0,T];\mathbb R^d)\), \(\Sigma_k\in C^1([0,T];\mathbb{S}_{++}^d)\), and
\[
Q_k(t):=2\varepsilon I-\Sigma_k'(t)\in\mathbb{S}_{++}^d
\qquad\text{for every }t\in[0,T].
\]
For each \(t\), let \(V_k(t)\) be the unique symmetric positive-definite solution of the Sylvester equation and define \(M_k(t)\) by
\begin{equation}\label{coeff_eq_1model}
\begin{cases}
\Sigma_k(t)V_k(t)+V_k(t)\Sigma_k(t)=2\varepsilon I-\Sigma_k'(t),\\[3pt]
M_k(t)=\mu_k(t)+V_k(t)^{-1}\mu_k'(t).
\end{cases}
\end{equation}
Then, for every \(k=1,\dots,K\),
\begin{equation}\label{equal_mean}
\nu_k(t)=\mu_k(t),\qquad T_k(t)=\Sigma_k(t)
\qquad\text{for every }t\in[0,T].
\end{equation}
Moreover,
\[
\mathbb E_{m(\cdot,t)}[X]=\mathbb E_{f(\cdot,t)}[X],
\qquad
\operatorname{Cov}_{m(\cdot,t)}[X]=\operatorname{Cov}_{f(\cdot,t)}[X].
\]
\end{proposition}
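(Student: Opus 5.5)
We will rely on Proposition~\ref{prop: explicit_sol}. Since the initial datum \eqref{initial_datum} is Gaussian, each $m_k(\cdot,t)$ is Gaussian with mean $\nu_k(t)$ and covariance $T_k(t)$. These solve the linear ODE system
\[
\nu_k'=-V_k(\nu_k-M_k),\qquad T_k'=2\varepsilon I-T_kV_k-V_kT_k,
\]
with $\nu_k(0)=\mu_k(0)$ and $T_k(0)=\Sigma_k(0)$.

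First we check that the coefficients in \eqref{coeff_eq_1model} are well defined. Fix $t$. Since $\Sigma_k(t)\in\mathbb{S}^d_{++}$, the spectra of $\Sigma_k$ and $-\Sigma_k$ are disjoint, so the Lyapunov/Sylvester equation $\Sigma_kV+V\Sigma_k=Q_k$ has a unique solution. Explicitly,
\[
V=\int_0^\infty e^{-s\Sigma_k}Q_k e^{-s\Sigma_k}\,ds .
\]
This integral shows that $V$ is symmetric, and that it is positive definite because $Q_k\in\mathbb{S}^d_{++}$. Hence $V_k(t)^{-1}$ exists and $M_k(t)$ is defined. Continuity of $V_k$ and $M_k$ in $t$ follows from the continuity of $\Sigma_k,\Sigma_k',\mu_k'$ and from the integral formula (or from the implicit function theorem). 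This guarantees that the ODE system has continuous coefficients.

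The core step is a uniqueness argument for the ODEs. Substituting $M_k$ into the mean equation gives
\[
\nu_k'=-V_k(\nu_k-\mu_k)+\mu_k' .
\]
The candidate $\nu_k=\mu_k$ satisfies this equation, and so does the true $\nu_k$. Their difference $e=\nu_k-\mu_k$ solves the linear homogeneous equation $e'=-V_ke$ with $e(0)=0$, so $e\equiv0$ by uniqueness (Grönwall). For the covariance, $\Sigma_k$ satisfies $\Sigma_k'=2\varepsilon I-\Sigma_kV_k-V_k\Sigma_k$ by the very definition of $V_k$. The difference $E=T_k-\Sigma_k$ then solves $E'=-EV_k-V_kE$ with $E(0)=0$, hence $E\equiv0$. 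This proves \eqref{equal_mean}. The point needing care is the ordering of the construction: $V_k$ and $M_k$ are built from $\mu_k,\Sigma_k$, which themselves depend on $m$ through $\gamma_k$. Consistent with the paper's convention, we take the solution $(m_k,\alpha_k)$ as given, so $\mu_k,\Sigma_k$ are fixed $C^1$ functions and the argument is a linear ODE comparison, not a fixed point.

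Finally, for the moments of the mixture we use $\sum_k\gamma_k=1$ and $\alpha_k=\int\gamma_kf$. The mean is
\[
\mathbb E_m[X]=\sum_k\alpha_k\nu_k=\sum_k\alpha_k\mu_k=\sum_k\int x\gamma_kf\,dx=\mathbb E_f[X].
\]
For second moments we use $\mathbb E_{m_k}[XX^\top]=T_k+\nu_k\nu_k^\top=\Sigma_k+\mu_k\mu_k^\top$. By \eqref{emmfg_var}, this equals $\int xx^\top\gamma_kf\,dx/\alpha_k$. Summing with weights $\alpha_k$ gives $\mathbb E_m[XX^\top]=\int xx^\top f\,dx$, and the finite second-moment assumption on $f$ ensures these integrals make sense. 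Subtracting the outer product of the common mean yields equality of the covariances. We expect no real obstacle beyond checking the positive-definiteness and regularity of $V_k$, which is the step where the hypothesis $Q_k\in\mathbb{S}^d_{++}$ is used.
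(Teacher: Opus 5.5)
Your proof is correct and takes essentially the same route as the paper. You use the Sylvester equation and the definition of $M_k$ to show that $(\mu_k,\Sigma_k)$ satisfy the same linear ODEs as $(\nu_k,T_k)$ from the explicit Gaussian solution, with matching initial data, so they coincide by uniqueness; you then obtain the mixture moments by summing the component moments using $\sum_k\gamma_k=1$. Your extra steps (well-posedness and continuity of $V_k$, and the error equations for $\nu_k-\mu_k$ and $T_k-\Sigma_k$) only make explicit what the paper states in its remark after the proposition or leaves implicit.
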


If a commutation condition \([\Sigma_k,\Sigma_k']=0\) holds, we have the closed-form expression
\[
V_k(t)=\frac12\Sigma_k(t)^{-1}\bigl(2\varepsilon I-\Sigma_k'(t)\bigr).
\]
Nevertheless, such a condition is not required. Indeed, for a positive-definite \(\Sigma_k\), the Sylvester operator \(V\mapsto\Sigma_kV+V\Sigma_k\) is invertible on symmetric matrices. Moreover, when \(Q_k\succ0\), $V_k=\int_0^\infty e^{-s\Sigma_k}Q_k e^{-s\Sigma_k}\,ds\succ0$, so \(V_k^{-1}\) in \eqref{coeff_eq_1model} is well defined. On a compact time interval, the sufficient condition \(Q_k\succ0\) can be enforced by choosing \(\varepsilon>\tfrac12\sup_t\lambda_{\max}(\Sigma_k'(t))\). The resulting moment identities remain independent of \(\varepsilon\), although the auxiliary drift parameters \(V_k,M_k\) do depend on it.
\subsection{Time-averaged log-likelihood model}\label{sec:second_model}
 The objective of this section is no longer to select the model parameters that maximize the instantaneous log-likelihood functional. Instead, we introduce a drift term that enables the mixture weights,  means, and covariances to maximize a time-averaged log-likelihood over a finite window. This approach smooths out short-term fluctuations in the observations. Specifically, we proceed following two different principles:
 \begin{itemize}
 \item (Asymmetric regularization) We assign weights to past data using a kernel function $g$, which performs a convolution over the interval $[t - \tau, t]$ for a fixed  \(\tau>0\). The kernel $g : [0,\tau] \to \mathbb{R}$ satisfies
\begin{equation}\label{eq:asym-relax}
g(\tau) = 0,
\quad
g(s) \geq 0 \quad \forall\,  s\in(0,\tau],
\quad
\int_{\R} g(s)\,ds = 1.
\end{equation}
As an explicit example of $g$, we can consider 
the following exponential kernel that assigns more weight to recent observations  (asymmetric)
\begin{equation}\label{eq:drift-asym} g(s) = \frac{e^{-s} - e^{-\tau}}{1 - e^{-\tau}(1 + \tau)} \mathbf{1}_{[0,\tau]}(s) \end{equation}
\item (Symmetric regularization) We use a standard symmetric mollifier giving the same relevance to the past and the future choosing $g$ as
\begin{equation}\label{eq:sym-relax} g(s):=\frac{2}{\tau C}
\exp\!\left(-\frac{1}{1-(2s/\tau)^2}\right)\mathbf{1}_{[-\tau/2,\tau/2]}(s), \hbox{ with }C=\int_{-1}^1 e^{-\frac{1}{1-s^2}}ds.
\end{equation}
in this case we have that 
\begin{equation*} 
g(-\tau/2)=g(\tau/2) = 0,
\quad
g(s) \geq 0 \quad \forall\,  s\in(-\tau/2,\tau/2],
\quad
\int_{\R} g(s)\,ds = 1.
\end{equation*}
\end{itemize}

Both kernels are normalized explicitly. For \eqref{eq:drift-asym},
\[
\int_0^\tau \frac{e^{-s}-e^{-\tau}}{1-e^{-\tau}(1+\tau)}\,ds
=\frac{1-e^{-\tau}-\tau e^{-\tau}}{1-e^{-\tau}(1+\tau)}=1.
\]
For \eqref{eq:sym-relax}, the substitution \(y=2s/\tau\) gives
\[
\int_{-\tau/2}^{\tau/2}g(s)\,ds
=\frac1C\int_{-1}^{1}\exp\!\left(-\frac{1}{1-y^2}\right)dy=1.
\]

Given $g$, we denote  by $*$ the standard convolution in $\R$, i.e.
\[
g *m=\int_{-\infty}^{\infty}m(s)g(t-s)ds.
\]
When applied to vector- or matrix-valued quantities, convolution is understood component-wise.
The aim is to find a Gaussian mixture $m(t)$\footnote{With a slight abuse of notation, in the time-averaged model we also denote by \(m\) the mixture
\(\sum_{k=1}^K \widetilde{\alpha}_k(t)m_k(\cdot,t)\).} given by the solution of \eqref{emMFG} which maximizes the convolution of the log-likelihood functional \eqref{log_lik} with the kernel $g$, i.e.
\begin{equation}\label{log_lik2}
	\begin{split}
		&\cL_g(\{\beta_k, \rho_k, \Delta_k\}_{k=1}^K \,\big|\, f):=\bigl(g *  \cL\bigr)(t)=\\
		&\quad=\sum_{k=1}^K
		\int_{\R}g(t-s)\int_{\R^d}\gamma_k(x,s)\ln\bigl[\beta_k\,\mathcal N(x\mid\rho_k,\Delta_k)\bigr]\,f(x,s)\,dx\,ds.
	\end{split}
\end{equation}
Note that, when the kernel $g$ coincides with the Dirac delta function $\delta_0$, then $\cL_g$ coincides with the functional $\cL$ in \eqref{log_lik}. 
To define the convolution near the temporal endpoints, all time-dependent quantities entering it are extended constantly outside $[0,T]$: by their value at $0$ on $([-\tau,0])$, and, for the symmetric kernel, by their value at $T$ on $[T,T+\tau/2]$.
\begin{proposition}\label{prop:log_lik2}
Fix \(t\) and assume $(g*\alpha_k)(t)>0$. The maximizers of \(\mathcal L_g\) with respect to \(\beta_k\) and \(\rho_k\) are
\begin{align}
\beta_k^\star(t)&=(g*\alpha_k)(t)=:\widetilde\alpha_k(t),\label{maximizers_glog1}\\
\rho_k^\star(t)&=\frac{(g*(\alpha_k\mu_k))(t)}{(g*\alpha_k)(t)}=:\widetilde\mu_k(t).\label{maximizers_glog2}
\end{align}
Assuming that the weighted covariance below is positive definite, the exact covariance maximizer is
\begin{equation}\label{eq:exact-temporal-covariance}
\Delta_k^\star(t)=\widetilde\Sigma_k(t)+R_k(t),
\end{equation}
where
\begin{equation}\label{maximizers_glog3}
\widetilde\Sigma_k(t):=
\frac{(g*(\alpha_k\Sigma_k))(t)}{(g*\alpha_k)(t)}
\end{equation}
and
\begin{equation}\label{eq:centroid-remainder}
R_k(t):=
\frac{\bigl(g*(\alpha_k(\mu_k-\widetilde\mu_k(t))(\mu_k-\widetilde\mu_k(t))^\top)\bigr)(t)}{(g*\alpha_k)(t)}.
\end{equation}
In particular, \(R_k(t)\succeq0\). Let
\[
I_t:=\{s:g(t-s)>0\},
\qquad
L_{\mu,k}(t):=\sup_{s\in I_t}\|\mu_k'(s)\|_2.
\]
Since both kernels used here have support diameter \(\tau\),
\begin{equation}\label{eq:covariance-remainder-bound}
\|R_k(t)\|_2\le \frac12 L_{\mu,k}(t)^2\tau^2.
\end{equation}
Consequently, the reduced update \(\Delta_k\approx\widetilde\Sigma_k\) has an explicit order-\(\tau^2\) error whenever the component mean varies on a slower time scale than the averaging window.
\end{proposition}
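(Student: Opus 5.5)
The plan is to reduce $\mathcal L_g$ to a finite-dimensional concave problem by integrating out $x$ first, and then to argue exactly as in Proposition~\ref{prop:log_lik_1}.

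\textbf{Step 1: integrate out $x$.} For each $s$, expand $\ln\bigl[\beta_k\mathcal N(x\mid\rho_k,\Delta_k)\bigr]$ as
\[
\ln\beta_k-\tfrac d2\ln(2\pi)-\tfrac12\ln\det\Delta_k-\tfrac12(x-\rho_k)^\top\Delta_k^{-1}(x-\rho_k).
\]
Use the definitions of $\alpha_k$, $\mu_k$, $\Sigma_k$ in \eqref{emMFG}, \eqref{emmfg_mean}, \eqref{emmfg_var}. They give $\int\gamma_k f\,dx=\alpha_k(s)$, $\int x\gamma_k f\,dx=\alpha_k\mu_k$, and the decomposition
\[
\int(x-\rho)(x-\rho)^\top\gamma_k f\,dx=\alpha_k\bigl(\Sigma_k+(\mu_k-\rho)(\mu_k-\rho)^\top\bigr).
\]
The finite second-moment assumption on $f$ makes all these integrals finite. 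Up to an additive constant, $\mathcal L_g$ then becomes
\[
\sum_k\Bigl[(g*\alpha_k)\ln\beta_k-\tfrac12(g*\alpha_k)\ln\det\Delta_k-\tfrac12\operatorname{tr}\bigl(\Delta_k^{-1}\,g*\bigl(\alpha_k(\Sigma_k+(\mu_k-\rho_k)(\mu_k-\rho_k)^\top)\bigr)\bigr)\Bigr],
\]
where all convolutions are evaluated at $t$.

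\textbf{Step 2: the three maximizers.}
\begin{itemize}
\item \emph{Weights.} The $\beta$-part is $\sum_k\widetilde\alpha_k\ln\beta_k$ on the simplex. A Lagrange multiplier, or Gibbs' inequality, gives $\beta_k^\star=\widetilde\alpha_k/\sum_j\widetilde\alpha_j$. The denominator equals $1$ because $\sum_j\alpha_j(s)=\int f=1$ and $\int g=1$.
\item \emph{Means.} For fixed $\Delta_k\succ0$, the $\rho_k$-part is a strictly convex quadratic to be minimized: $\operatorname{tr}\bigl(\Delta_k^{-1}g*(\alpha_k(\mu_k-\rho)(\mu_k-\rho)^\top)\bigr)$. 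Its gradient vanishes exactly when $g*(\alpha_k(\mu_k-\rho))=0$, which gives $\rho_k^\star=\widetilde\mu_k$, independently of $\Delta_k$.
\item \emph{Covariances.} Substituting $\rho_k^\star$ and dividing by $\widetilde\alpha_k$, the $\Delta_k$-part becomes $-\tfrac12\widetilde\alpha_k\bigl[\ln\det\Delta_k+\operatorname{tr}(\Delta_k^{-1}S)\bigr]$ with $S=\widetilde\Sigma_k+R_k$. The classical argument from Proposition~\ref{prop:log_lik_1} (differentiate in $\Delta^{-1}$, or use $\ln\det A-\operatorname{tr}A\le -d$) gives the unique maximizer $\Delta_k^\star=S$ whenever $S\succ0$.
\end{itemize}
$R_k\succeq0$ holds because it is a nonnegative mixture of rank-one positive semidefinite matrices: $g\ge0$ and $\alpha_k\ge0$.

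\textbf{Step 3: the bound \eqref{eq:covariance-remainder-bound}, which I expect to be the main obstacle.} Introduce the probability measure $d\pi(s)=g(t-s)\alpha_k(s)\,ds/\widetilde\alpha_k$ on $I_t$. Then $\widetilde\mu_k$ is the $\pi$-mean of $\mu_k$ and $R_k$ is its $\pi$-covariance. Hence
\[
\|R_k\|_2\le\operatorname{tr}R_k=\int|\mu_k(s)-\widetilde\mu_k|^2\,d\pi(s).
\]
Since $\widetilde\mu_k$ is an average of the values $\mu_k(s)$, it lies in their convex hull. I can avoid the symmetrization identity $\int|\mu_k-\widetilde\mu_k|^2\,d\pi=\tfrac12\iint|\mu_k(s)-\mu_k(r)|^2\,d\pi\,d\pi$. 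Choose any $s_0\in I_t$; because the mean minimizes the quadratic deviation,
\[
\int|\mu_k-\widetilde\mu_k|^2\,d\pi\le\int|\mu_k(s)-\mu_k(s_0)|^2\,d\pi\le L^2\sup_{s\in I_t}|s-s_0|^2 .
\]
Here the mean value inequality applies because $I_t$ is an interval of length $\tau$, so $\mu_k$ is $L_{\mu,k}(t)$-Lipschitz on it. Taking $s_0$ at the midpoint gives $\sup_{s\in I_t}|s-s_0|\le\tau/2$, hence $\operatorname{tr}R_k\le L^2\tau^2/4\le\tfrac12L^2\tau^2$.

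One check is still needed: the constant extension outside $[0,T]$ must keep $\mu_k$ Lipschitz on $I_t$ with the same constant. It does, since the extended function is constant there. The final ``consequently'' remark is then immediate from \eqref{eq:exact-temporal-covariance}.
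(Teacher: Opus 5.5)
Your proof is correct. For the weights, means and covariance it matches the paper's argument in substance.

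\textbf{Maximizers.} The paper differentiates under the double integral: a Lagrange multiplier for $\beta_k$, then stationarity in $\rho_k$ and $\Delta_k$. Only afterwards does it split the second moment as $\alpha_k(\Sigma_k+(\mu_k-\rho_k)(\mu_k-\rho_k)^\top)$, and it invokes strict concavity in the precision matrix for uniqueness. You integrate out $x$ first, which makes the concave finite-dimensional structure, and hence global optimality, visible from the start. This is a cosmetic difference.

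\textbf{The bound on $R_k$.} Here your route genuinely differs. The paper uses the same probability measure $dP_{k,t}$ but then applies the symmetrization identity $R_k=\tfrac12\iint(\mu_k(s)-\mu_k(r))(\mu_k(s)-\mu_k(r))^\top\,dP_{k,t}(s)\,dP_{k,t}(r)$. It moves the spectral norm inside the integral and uses $|s-r|\le\tau$, which produces exactly the stated constant $\tfrac12$.

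You instead combine $\|R_k\|_2\le\operatorname{tr}R_k$ with the variational characterization of the mean, comparing with $\mu_k(s_0)$ at the midpoint of $I_t$. This needs only distances $|s-s_0|\le\tau/2$ and yields $\|R_k(t)\|_2\le\tfrac14 L_{\mu,k}(t)^2\tau^2$. That is a factor two sharper than the proposition, and it is the optimal constant for the variance of a Lipschitz function on an interval of length $\tau$.

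\textbf{What each route costs.} Yours requires choosing a centre, so convexity of $I_t$ enters explicitly. The paper's pairwise form looks centre-free, but it needs the same fact: the estimate $\|\mu_k(s)-\mu_k(r)\|_2\le L_{\mu,k}(t)|s-r|$, with $L_{\mu,k}$ a supremum over $I_t$, holds only if the segment between $s$ and $r$ lies in $I_t$. Your checks that $I_t$ is an interval for both kernels, and that the constant extension near $0$ and $T$ does not increase the Lipschitz constant, fill in details the paper leaves implicit.
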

The proof is given in the \ref{app:proofs}.

The omitted matrix \(R_k\) is the weighted temporal covariance of the component centroid within the averaging window. The estimate \eqref{eq:covariance-remainder-bound} shows that this contribution is of order \(O(\tau^2)\) for bounded centroid velocities, but its magnitude relative to the within-component covariance may increase during strong cluster interactions. The reduced update \(\Delta_k\approx\widetilde\Sigma_k\) should therefore be interpreted as a timescale-separated approximation rather than as a uniformly negligible correction. An a posteriori evaluation of \(R_k\) for the two-dimensional benchmark is reported in Section~\ref{subsec:test2}.

The next proposition chooses the affine drift so that the component means and covariances follow the reduced time-averaged targets in \eqref{maximizers_glog2}--\eqref{maximizers_glog3}. The weights and means then coincide with the exact maximizers of the averaged log-likelihood, while the covariance differs from its exact maximizer by the positive-semidefinite temporal-centroid term $R_k$ quantified in Proposition \ref{prop:log_lik2}.
\begin{proposition}\label{prop:coeffmixt_kernel}
Let \((m_k,\alpha_k)\), \(k=1,\dots,K\), solve \eqref{emMFG}. Assume that \(\widetilde\mu_k\) and \(\widetilde\Sigma_k\) are continuously differentiable, \(\widetilde\Sigma_k(t)\in\mathbb{S}_{++}^d\), and
\[
2\varepsilon I-\widetilde\Sigma_k'(t)\in\mathbb{S}_{++}^d.
\]
Let \(V_k(t)\) be the unique symmetric positive-definite solution of
\[
\widetilde\Sigma_k(t)V_k(t)+V_k(t)\widetilde\Sigma_k(t)
=2\varepsilon I-\widetilde\Sigma_k'(t),
\]
and define
\[
M_k(t)=\widetilde\mu_k(t)+V_k(t)^{-1}\widetilde\mu_k'(t).
\]
Assume moreover that $\nu_k(0)=\widetilde\mu_k(0)$, $T_k(0)=\widetilde\Sigma_k(0)$.
Then the mean and covariance generated by the Fokker--Planck equation satisfy
\[
\nu_k(t)=\widetilde\mu_k(t),
\qquad
T_k(t)=\widetilde\Sigma_k(t).
\]
Thus the model reproduces exactly the time-averaged weight and mean maximizers and the reduced covariance target \eqref{maximizers_glog3}; its discrepancy from the exact covariance maximizer is the positive-semidefinite matrix \(R_k\) bounded in \eqref{eq:covariance-remainder-bound}. Moreover, using the coefficients \(\widetilde\alpha_k\) defined in \eqref{maximizers_glog1},
\[
\mathbb{E}_{m(\cdot,t)}[X]=(g*\mathbb E_f[X])(t),
\]
and
\[
\begin{aligned}
\operatorname{Cov}_{m(\cdot,t)}[X]
&=(g*\mathbb E_f[XX^\top])(t)
 -(g*\mathbb E_f[X])(t)(g*\mathbb E_f[X])(t)^\top\\
&\quad +\sum_{k=1}^{K}\widetilde\alpha_k(t)\widetilde\mu_k(t)\widetilde\mu_k(t)^\top
 -\left(g*\sum_{k=1}^{K}\alpha_k\mu_k\mu_k^\top\right)(t).
\end{aligned}
\]
\end{proposition}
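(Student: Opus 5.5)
The plan has two parts: reduce the moment identities to uniqueness for linear ODEs, then compute the mixture moments directly.

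\textbf{Step 1: the component moments.} By Proposition~\ref{prop: explicit_sol}, each $m_k(\cdot,t)$ is Gaussian with mean $\nu_k$ and covariance $T_k$. These solve
\[
\nu_k'=-V_k(\nu_k-M_k),\qquad T_k'=2\varepsilon I-T_kV_k-V_kT_k .
\]
We show that $(\widetilde\mu_k,\widetilde\Sigma_k)$ solves the same system. From the definition of $M_k$ we have $V_k(M_k-\widetilde\mu_k)=\widetilde\mu_k'$, so
\[
-V_k(\widetilde\mu_k-M_k)=\widetilde\mu_k' .
\]
The Sylvester equation gives exactly $\widetilde\Sigma_k'=2\varepsilon I-\widetilde\Sigma_kV_k-V_k\widetilde\Sigma_k$.

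\textbf{Step 2: well-posedness and uniqueness.} The hypothesis $2\varepsilon I-\widetilde\Sigma_k'\succ0$, via the integral representation $V_k=\int_0^\infty e^{-s\widetilde\Sigma_k}Q_ke^{-s\widetilde\Sigma_k}\,ds$ recalled after Proposition~\ref{prop:equal_data}, makes $V_k$ symmetric positive definite. Hence $V_k^{-1}$ and $M_k$ are well defined. Since $\widetilde\Sigma_k$ and $\widetilde\Sigma_k'$ are continuous, $V_k$ (the inverse Sylvester operator applied to a continuous right-hand side) and $M_k$ are continuous in $t$. Both ODEs are linear with continuous coefficients, so their solutions are unique. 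With the matching initial data $\nu_k(0)=\widetilde\mu_k(0)$ and $T_k(0)=\widetilde\Sigma_k(0)$, this gives $\nu_k\equiv\widetilde\mu_k$ and $T_k\equiv\widetilde\Sigma_k$. This is the same argument as in Proposition~\ref{prop:equal_data}.

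\textbf{Step 3: the relation to the exact maximizer.} The statement that weights and means match the exact maximizers, and that the covariance differs from the exact one by $R_k$, then follows directly from Proposition~\ref{prop:log_lik2}.

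\textbf{Step 4: the mixture mean.} Let $m=\sum_k\widetilde\alpha_k m_k$. Then
\[
\mathbb E_m[X]=\sum_k\widetilde\alpha_k\widetilde\mu_k=\sum_k g*(\alpha_k\mu_k)=g*\Bigl(\sum_k\alpha_k\mu_k\Bigr),
\]
using $\widetilde\alpha_k\widetilde\mu_k=g*(\alpha_k\mu_k)$ and linearity of convolution. The responsibilities sum to one, so
\[
\sum_k\alpha_k\mu_k=\sum_k\int x\gamma_kf\,dx=\mathbb E_f[X].
\]

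\textbf{Step 5: the mixture covariance.} First,
\[
\mathbb E_m[XX^\top]=\sum_k\widetilde\alpha_k\bigl(\widetilde\Sigma_k+\widetilde\mu_k\widetilde\mu_k^\top\bigr)=g*\Bigl(\sum_k\alpha_k\Sigma_k\Bigr)+\sum_k\widetilde\alpha_k\widetilde\mu_k\widetilde\mu_k^\top .
\]
Next, $\sum_k\alpha_k(\Sigma_k+\mu_k\mu_k^\top)=\sum_k\int xx^\top\gamma_kf\,dx=\mathbb E_f[XX^\top]$. Therefore
\[
g*\Bigl(\sum_k\alpha_k\Sigma_k\Bigr)=g*\mathbb E_f[XX^\top]-g*\Bigl(\sum_k\alpha_k\mu_k\mu_k^\top\Bigr).
\]
Subtracting $\mathbb E_m[X]\mathbb E_m[X]^\top$ and using Step 4 yields the stated formula.

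\textbf{Main obstacle.} The computations are routine; the delicate point is regularity. We need $V_k$ and $M_k$ continuous, so the ODE uniqueness theorem applies, and we need the Gaussian form from Proposition~\ref{prop: explicit_sol} to hold for these time-dependent coefficients. Both follow from the continuity of the inverse Sylvester map on $\mathbb S^d_{++}$.
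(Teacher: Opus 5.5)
Your proposal is correct and follows essentially the same route as the paper. The component moments are identified with \(\widetilde\mu_k,\widetilde\Sigma_k\) by the same linear-ODE uniqueness argument used for Proposition~\ref{prop:equal_data}, and the mixture mean and covariance then follow by direct computation using \(\sum_k\gamma_k=1\) and linearity of the convolution; your explicit check that \(V_k\) and \(M_k\) are continuous, so that uniqueness really applies, is a small addition the paper leaves implicit.
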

The proof is given in the \ref{app:proofs}.

The instantaneous model is sensitive to noise because it differentiates \(\mu_k\) and \(\Sigma_k\) numerically. The time-averaged model reduces this sensitivity through convolution. The asymmetric kernel \eqref{eq:drift-asym} is causal and can be used sequentially from past observations; the symmetric kernel \eqref{eq:sym-relax} is non-causal and is intended for offline reconstruction of a fully observed sequence.


\section{Numerical experiments and comparative assessment}
\label{sec:num_examples}

This section investigates the numerical behaviour of the proposed evolutionary clustering formulations and clarifies the different roles played by statistical estimation, temporal regularization, and controlled population
dynamics. The experiments have been reorganized around three complementary objectives.

The first test is a one-dimensional synthetic problem for which the exact evolution of the underlying groups is known. It is used to compare all the approaches considered in the paper in a very simplified, easy to visualise case.
The second test is a two-dimensional benchmark designed to compare the proposed formulations with simpler alternatives from the evolutionary clustering literature. In particular, the regularized models are compared
with a classical snapshot expectation-maximization procedure and with the same technique applied to temporally smoothed data, as well as with the established Evolutionary \(k\)-means method of Chakrabarti et al.~\cite{chakrabarti2006}. The latter provides an external non-EM evolutionary clustering baseline in which temporal coherence is introduced directly at the level of the cluster centroids. Accuracy, temporal regularity, and computational cost are reported and discussed in a common table.

The third test employs real time-dependent data. Since the example is more illustrative, the comparison focuses on the symmetric regularized model and the non-parametric mean field game formulation. The results are assessed in terms of data fidelity, temporal
coherence, robustness of the inferred populations, and computational time.

This organization distinguishes three levels of modeling. The static method, used as reference, contains no temporal information. The instantaneous and time-averaged models retain a Gaussian mixture representation and evolve its parameters through the finite-dimensional relations derived in Section~\ref{sec:EM_model}. In Test~2, Evolutionary \(k\)-means is included as an additional literature baseline: unlike the Gaussian-mixture formulations, it represents each cluster through a centroid and introduces historical consistency by regularizing the current centroids with respect to those obtained at the preceding time level. The non-parametric formulation instead evolves the component densities directly and computes their velocity fields from the coupled Hamilton--Jacobi--Fokker--Planck system. It therefore serves a different
purpose: it is more computationally demanding, but it does not require the shape of each component to be prescribed in advance.

\subsection{Test 1: one-dimensional moving and intersecting clusters}
\label{test1}

The first test is designed to compare all the clustering formulations
considered in this work under controlled conditions. In particular, it
compares independent static clustering, the instantaneous Gaussian model,
the asymmetric and symmetric temporally regularized models, and the
non-parametric mean field game formulation described above.

The computational domain and final time are
\[
    \Omega=[-0.5,1.5],
    \qquad
    T=5.
\]
The observed probability density is composed of three moving rectangular
profiles:
\[
    f(x,t)
    =
    \frac{1}{Z}
    \sum_{k=1}^{3}
    c_k\,\mathbbm{1}_{I_k(t)}(x),
    \label{eq:test1-density}
\]
where
\[
\begin{aligned}
 I_1(t)&=[0.12+0.10t,\;0.18+0.10t],\\
 I_2(t)&=[0.37+0.10t,\;0.42+0.10t],\\
 I_3(t)&=[0.60-0.10t,\;0.90-0.10t],
\end{aligned}
\]
with amplitudes
\[
    (c_1,c_2,c_3)=(3.0,7.0,1.5).
\]
The constant \(Z\) normalizes the total mass. The first two clusters move
towards the right, whereas the third one moves towards the left. The
supports therefore overlap during the simulation. In particular, the second
and third populations become difficult to distinguish near their
intersection.

This example tests two distinct difficulties. First, independent snapshot
clustering may exchange the component labels when the supports overlap.
Second, the rectangular data cannot be represented exactly by Gaussian
component densities. The experiment therefore provides a direct assessment
of the density-based formulation outside the Gaussian setting.

All methods are initialized from the same static clustering of the
observation at \(t=0\). For the static baseline, the clustering problem is
then solved independently at each time level. For the instantaneous model,
the moments are updated from the current responsibilities. The asymmetric
and symmetric methods use the temporal kernels introduced in
Section~\ref{sec:second_model} with parameter $\tau=0.5$. The asymmetric kernel is causal, whereas the
symmetric kernel uses both past and future observations.

For the non-parametric method, the interval is discretized using
\[
    N_x=301,
    \qquad
    \Delta x=\frac{2}{300}\simeq 6.67\times 10^{-3},
    \qquad
    \Delta t=\Delta x.
\]
The diffusion parameter and the coupling coefficient are $   \varepsilon=0.08$, $ \eta=0.30$. The positivity threshold is $ \delta_{\mathrm{num}}=10^{-10}$,
and the pseudo-time step in the Hamilton--Jacobi iteration is chosen as $   \Delta r=\Delta x$,
and the stopping tolerance is $ \mathrm{tol}_{\mathrm{HJ}}=10^{-7}$,
with at most \(500\) pseudo-time iterations at each physical time and for
each component. The value function computed at the preceding physical time
is used as the initial guess for the next stationary problem. This warm
start substantially reduces the number of pseudo-time iterations because
the coupling changes gradually in time.

The same diffusion coefficient is used in the Hamilton--Jacobi and
Fokker--Planck equations. After each Fokker--Planck update, negative values
arising at the level of round-off are truncated and the density is
renormalized. These corrections were negligible in the reported
computations.

\paragraph{Qualitative comparison}

Figure~\ref{fig:test1-snapshots} compares the reconstructed mixtures at a
selection of observation times. The static baseline gives an accurate
description of individual snapshots when the three supports are clearly
separated. However, since each frame is processed independently, component
labels and shapes may change abruptly near the intersections. This behaviour
is not necessarily visible in the total reconstructed density but becomes
apparent when the individual populations and their centroids are followed
over time.

The instantaneous model preserves the population evolution through the
Fokker--Planck dynamics and avoids repeatedly solving a complete static
fixed-point problem. It reacts rapidly to changes in the observed density,
but its moment estimates remain sensitive to overlap. Consequently, it can
display short-time oscillations or rapid exchanges between nearby mixture
configurations.

The asymmetric model reduces these oscillations by averaging information
over past observations. Its causal structure makes it appropriate for online
applications, but the use of past information introduces a systematic
tracking delay. The effect is particularly visible for the narrow moving
components, for which a small spatial displacement represents a substantial
fraction of their width.

The symmetric regularization uses information on both sides of the current
time and therefore produces smoother trajectories without the same causal
delay. It gives the most regular Gaussian reconstruction in this experiment,
at the price of requiring the entire observation interval and a global
offline computation.

The non-parametric computation uses the coupling
\eqref{eq_KL}. All populations are affected by the same
mixture-level Kullback--Leibler discrepancy, scaled by their current
mixture weights, while the term \(\log m_k\) introduces a
component-dependent contribution.

When \(\eta=0\), the component-dependent entropic term disappears.
Consequently, the populations are driven only by the discrepancy between
the complete reconstructed mixture and the observed density. Near an
intersection, this may produce similar velocity fields for different
components and may lead to a loss of component separation.

The choice \(\eta=0.30\) assigns weight \(0.30\) to the
component-dependent entropic contribution and weight \(0.70\) to the
mixture-level fidelity term. The fidelity term therefore remains dominant,
while the entropic contribution discourages excessive concentration and
helps preserve differentiated component profiles. This regularization is
spatial and component-dependent; it does not amount to a temporal
averaging of the observations.

A brief sensitivity analysis was performed by varying the coupling parameter
over \(\eta\in\{0.15,0.30,0.45\}\), while keeping all other numerical
parameters unchanged. The intermediate value \(\eta=0.30\) provides the best
performance for all the considered indicators, with mean and maximum centre
errors equal to \(0.0934\) and \(0.1617\), mean Wasserstein discrepancy
\(0.0338\), and trajectory variation \(E_{\mathrm{TV}}=1.6777\).
Both decreasing \(\eta\) to \(0.15\) and increasing it to \(0.45\) lead to
larger localization and density errors as well as less regular centroid
trajectories. These results indicate that the selected value
\(\eta=0.30\) provides a favourable balance between the
component-dependent entropic regularization and the mixture-level fidelity
term, while also showing that the model exhibits a non-negligible
sensitivity to this coupling parameter.

The computed populations are not constrained to be Gaussian. They can
therefore adapt their support and asymmetry directly through the
Fokker--Planck dynamics. This is the main distinction between the
non-parametric result and the instantaneous and time-averaged Gaussian
models. The increased flexibility comes at a higher computational cost,
since one nonlinear stationary Hamilton--Jacobi problem and one linear
Fokker--Planck problem must be solved for every component and every time
level.

\begin{figure}[H]
    \centering
    \includegraphics[width=12cm]{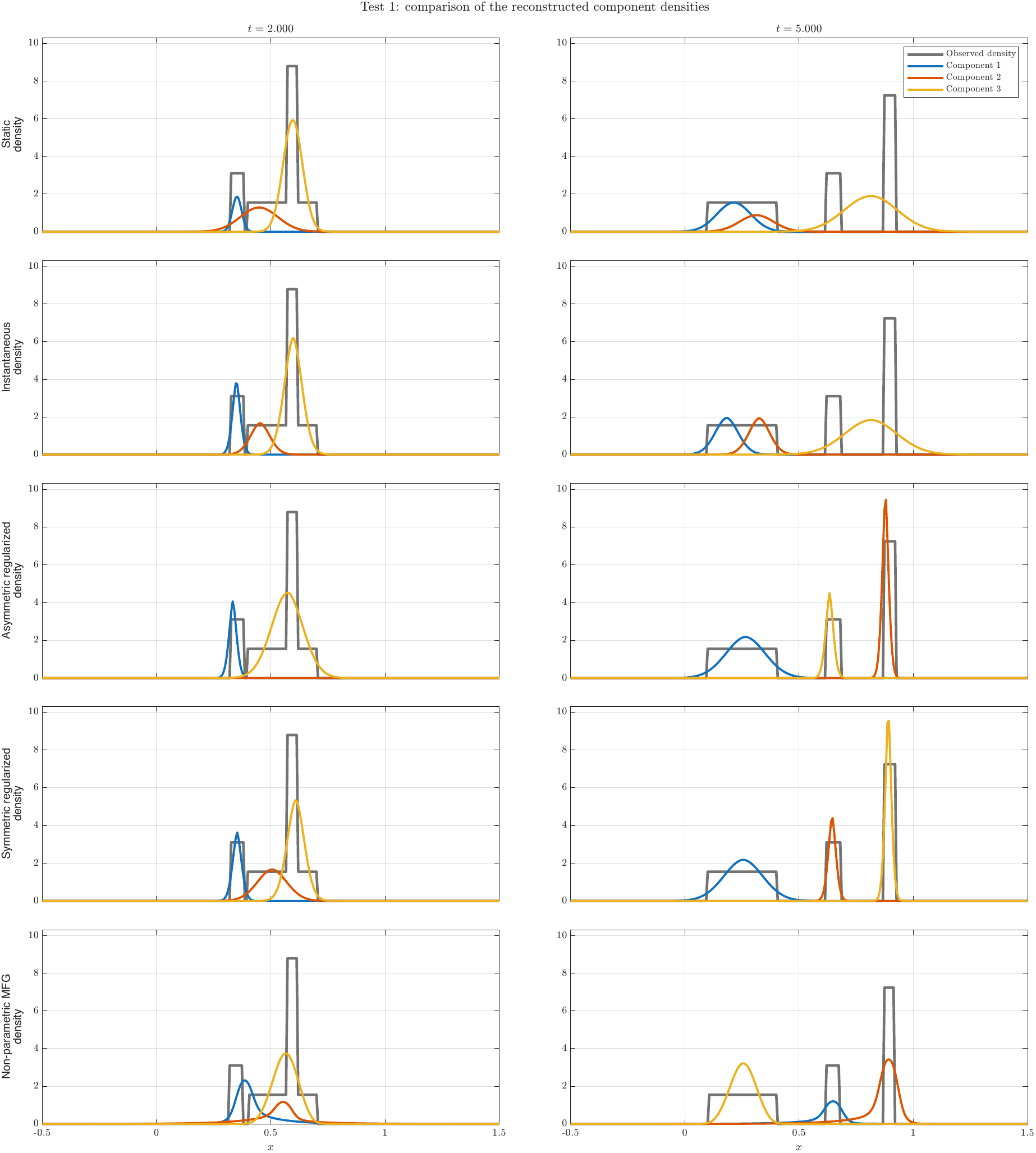}
    \caption{Test~1: reconstruction of the moving one-dimensional density at
    selected times. From top to bottom: static snapshot clustering,
    instantaneous evolution, asymmetric regularization, symmetric
    regularization, and the non-parametric mean field game model. The grey
    curve denotes the observed density and the coloured curves are the
    weighted component densities \(\alpha_k(t)m_k(\cdot,t)\).}
    \label{fig:test1-snapshots}
\end{figure}

\paragraph{Evolution of the population means.}

For every method, we monitor the mean position
\[
    \nu_k(t)
    =
    \int_\Omega x\,m_k(x,t)\,dx,
    \label{eq:test1-population-mean}
\]
where the populations are normalized to unit mass. To compare the inferred
means with the exact structures, the true centres are
\[
\begin{aligned}
 \nu_1^\star(t)&=0.15+0.10t,\\
 \nu_2^\star(t)&=0.395+0.10t,\\
 \nu_3^\star(t)&=0.75-0.10t.
\end{aligned}
\]
Figure~\ref{fig:test1-centroids} reports the corresponding trajectories.
The static and instantaneous methods react promptly but may undergo
oscillations or label exchanges near the intersections. The asymmetric
regularization produces smoother curves with a visible temporal delay. The
symmetric model removes most of the high-frequency variations. The non-parametric model is able to retain non-Gaussian component shapes
and the responsibility-based coupling improves the persistence
of the inferred assignments during overlap.

\begin{figure}[ht]
    \centering
    \includegraphics[width=\textwidth]{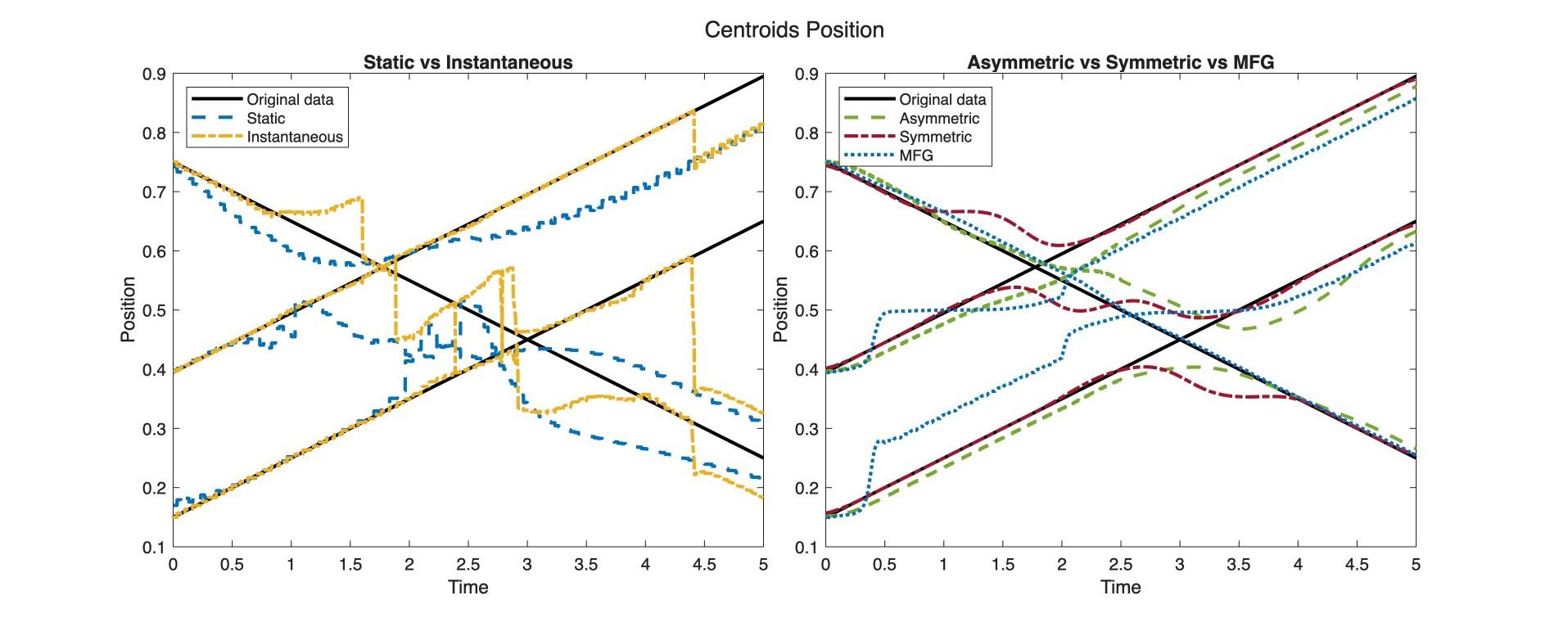}
    \caption{Test~1: evolution of the population means
    \(\nu_k(t)\). Dashed curves indicate the exact centres of the three
    rectangular profiles. The figure compares the static, instantaneous,
    asymmetric, symmetric, and non-parametric mean field game
    reconstructions.}
    \label{fig:test1-centroids}
\end{figure}

\paragraph{Quantitative indicators.}

We assess the methods using complementary measures of snapshot accuracy and
temporal coherence. The error in the population means is defined after
matching the reconstructed and exact components by the permutation
\(\pi_n\) minimizing their total distance:
\[
    E_\nu(t_n)
    =
    \min_{\pi\in\mathfrak S_K}
    \sum_{k=1}^{K}
    \left|
        \nu_k(t_n)-\nu_{\pi(k)}^\star(t_n)
    \right|.
    \label{eq:centroid-error-test1}
\]
This matching prevents a purely numerical change of component labels from
being interpreted as a localization error.

The discrepancy between the observed density and the complete reconstructed
mixture
\[
    m(x,t_n)
    =
    \sum_{k=1}^{K}
    \alpha_k^n m_k(x,t_n)
\]
is measured by the one-dimensional \(1\)-Wasserstein distance. On a uniform
grid, it is efficiently evaluated from the cumulative distributions:
\[
    W_1(f^n,m^n)
    =\Delta x
    \sum_{i=1}^{N_x}
    \left|
       \sum_{r=1}^{i} f_r^n \Delta x
       -
       \sum_{r=1}^{i} m_r^n \Delta x
    \right|.
    \label{eq:wasserstein-1d-test1}
\]
This formula is equivalent to the optimal-transport definition in one space
dimension and avoids solving a full linear programming problem at every time
step.

To measure temporal stability, we additionally introduce
\[
    E_{\mathrm{TV}}
    =
    \sum_{n=1}^{N_T}
    \sum_{k=1}^{K}
    \left|
       \nu_k(t_n)-\nu_k(t_{n-1})
    \right|,
    \label{eq:trajectory-variation-test1}
\]
after consistently matching the population labels. A small value indicates
a regular trajectory, although it should be interpreted together with
\(E_\nu\), since excessive smoothing can produce a small variation while
delaying the reconstructed clusters.

\begin{figure}[ht]
    \centering
    \includegraphics[width=\textwidth]{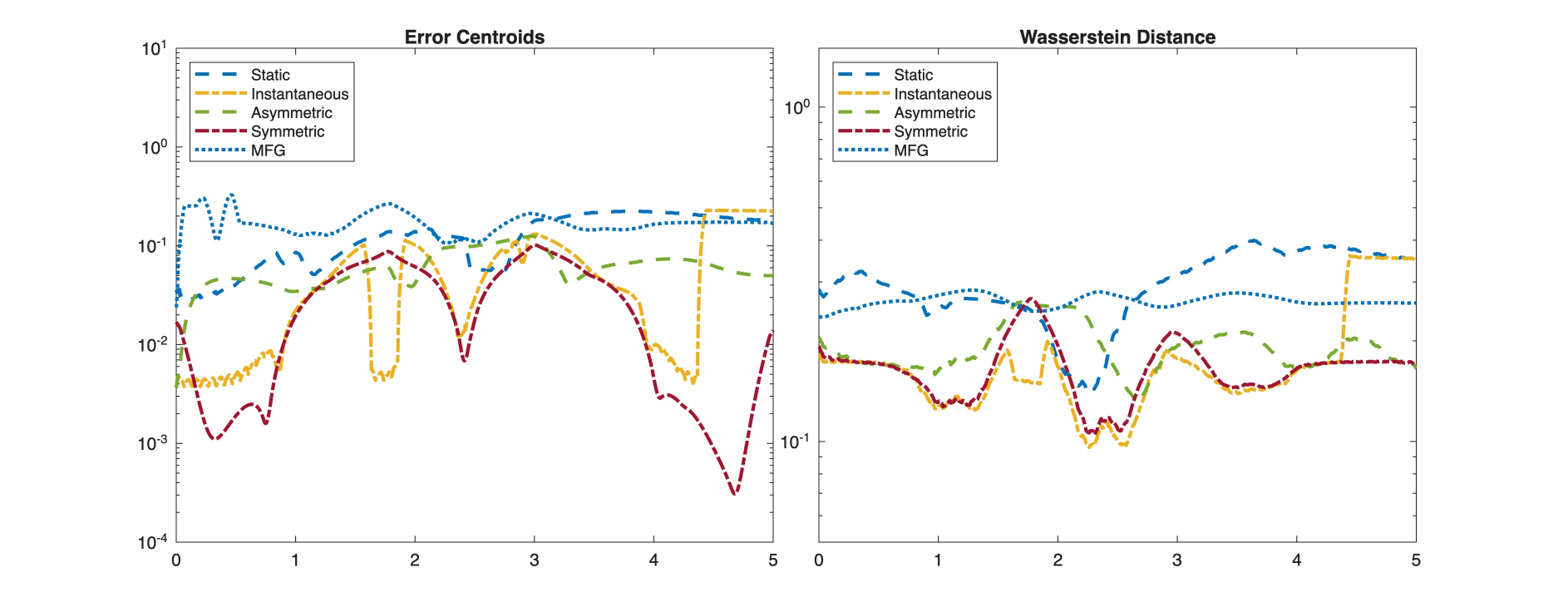}
    \caption{Test~1: quantitative comparison of the five methods.
    The panels report the population-mean error
    \(E_\nu(t)\), the \(1\)-Wasserstein discrepancy
    \(W_1(f(t),m(t))\), and, when displayed cumulatively, the temporal
    variation of the reconstructed trajectories.}
    \label{fig:test1-errors}
\end{figure}

For completeness, Table~\ref{tab:test1-summary} summarizes the errors and
computational costs over the complete time interval. All entries in a given
column are reported with the same number of decimal places. The reported
wall-clock times are measured on the same machine and include all iterations required
by each method.

\begin{table}[ht]
\centering
\caption{Test~1: accuracy, temporal regularity, and computational cost.
The mean and maximum errors are evaluated over all observation times.}
\vspace{0.5cm}
\label{tab:test1-summary}
\begin{tabular}{lrrrrr}
\toprule
Method
& Mean \(E_\nu\)
& Max. \(E_\nu\)
& Mean \(W_1\)
& \(E_{\mathrm{TV}}\)
& Wall time [s]\\
\midrule
Static & 0.1657 & 0.4529 & 0.0240 & 3.9526 & 32.00 \\
Instantaneous & 0.0873 & 0.4725 & 0.0126 & 3.5761 & 13.00 \\
Asymmetric regularized & 0.0626 & 0.1270 & 0.0208 & 2.1016 & 18.00 \\
Symmetric regularized & 0.0324 & 0.1042 & 0.0110 & 1.6127 & 64.00 \\
Non-parametric MFG & 0.0934 & 0.1617 & 0.0338 & 1.6777 & 53.00 \\
\bottomrule
\end{tabular}
\end{table}

The symmetric regularization gives the smallest mean and maximum
population-mean errors and the smallest mean Wasserstein discrepancy. It
also produces the most regular component trajectories, although it is the
most expensive method in this experiment because it requires a global
fixed-point iteration over the complete time interval.

The instantaneous model achieves a small mean Wasserstein error and has the
lowest computational cost, but its relatively large maximum centre error
confirms its sensitivity near component intersections. The asymmetric
method substantially reduces the centre error, although the causal temporal
average introduces a visible tracking delay.

The non-parametric MFG method produces a comparatively regular evolution
and a moderate maximum centre error. Its larger Wasserstein discrepancy is
partly explained by the diffusion and by the fact that the coupling acts on
the complete reconstructed mixture rather than independently fitting every
snapshot. Its main advantage is therefore not the instantaneous
reconstruction error, but the ability to evolve unrestricted component
densities without imposing Gaussian profiles.

\subsection{Test 2: comparison with parametric Gaussian-mixture and evolutionary clustering approaches}
\label{subsec:test2}

The purpose of this experiment is to compare the proposed evolutionary
clustering techniques with standard parametric approaches based on Gaussian
mixture models. In particular, we consider both an instantaneous
Expectation--Maximization method and an EM procedure applied to temporally
smoothed observations. The latter represents a natural way of introducing
temporal regularity into a classical parametric clustering pipeline without
modifying the EM algorithm itself.
In addition, to provide a comparison with an established
evolutionary clustering method that is not based on EM, we include the
Evolutionary \(k\)-means algorithm of Chakrabarti et al.~\cite{chakrabarti2006}.

The computational domain is
\[
    \Omega=[-0.5,1.5]^2,
\]
and the observation interval is
\[
    [0,T], \qquad T=1.5.
\]
The artificial density is generated by three moving disks having different
sizes, amplitudes, and trajectories. More precisely, their centres evolve as
\[
    c_1(t)=
    \begin{pmatrix}
        0.5\\
        0.5-0.25t
    \end{pmatrix},
    \qquad
    c_2(t)=
    \begin{pmatrix}
        0.75t\\
        1-0.30t
    \end{pmatrix},
    \qquad
    c_3(t)=
    \begin{pmatrix}
        0.75t\\
        0.75t
    \end{pmatrix}.
\]
The amplitudes and radii are
\[
    (a_1,a_2,a_3)=(1,2,0.5),
    \qquad
    (r_1^2,r_2^2,r_3^2)=(0.15,0.05,0.05).
\]
The three populations have different spatial scales and intensities, so that
the test simultaneously assesses the ability of the methods to track
components with unequal masses, preserve weak populations, and distinguish
clusters during partial overlap.

The reference density can be written as
\[
    f(x,t)
    =
    \frac{1}{Z}
    \sum_{k=1}^{3}
   a_k
    \mathbf{1}_{B(c_k(t),r_k)}(x),
\]
where \(a_k\) and \(r_k\) denote, respectively, the amplitude and size of the
\(k\)-th moving disk, while \(Z\) is a normalization constant. The numerical
discretization uses a triangular mesh with characteristic size
\(\Delta x = 8\times 10^{-3}\), and the time step is
\(\Delta t=\frac{\Delta x}{2}\).

The following clustering strategies are compared.

\begin{enumerate}
    \item \textbf{Independent snapshot EM.}
    At each observation time, a Gaussian mixture is fitted independently to
    the current density (see \cite{Bishop}). No information from the previous or subsequent time
    levels is used.

    \item \textbf{EM on temporally smoothed data.}
    The observation is first regularized in time,
    \[
        f_\sigma(x,t)
        =
        \int_0^T
        g_\sigma(t-s)f(x,s)\,ds,
    \]
    where \(g_\sigma\) is a Gaussian temporal kernel. A standard Gaussian
    mixture is then fitted independently to \(f_\sigma(\cdot,t)\).

    \item \textbf{Evolutionary \(k\)-means.}
    As an established discrete-time evolutionary clustering baseline, we
    consider the method introduced in~\cite{chakrabarti2006}. For the present
    continuous-density test, the method is implemented through a
    quadrature-weighted Euclidean adaptation: the barycentre \(x_i\) of each
    mesh triangle \(T_i\) is treated as a data point with weight
    \[
        w_i^n = |T_i|\,f(x_i,t_n).
    \]
    At every time level, a weighted \(k\)-means assignment is computed and the
    current centroid is regularized towards the closest centroid from the
    preceding time level. Denoting by \(\bar x_k^n\) and \(M_k^n\) the weighted
    mean and mass of the current \(k\)-th cluster, and by \(\ell(k)\) the
    closest previous centroid, the update has the form
    \[
        \widehat c_k^n
        =
        \frac{
        (1-c_p)M_k^n\bar x_k^n
        +
        c_p M_{\ell(k)}^{n-1}\widehat c_{\ell(k)}^{\,n-1}}
        {(1-c_p)M_k^n+c_pM_{\ell(k)}^{n-1}},
    \]
    and is iterated together with the nearest-centroid assignments until
    convergence. The change parameter is fixed at \(c_p=0.25\). This baseline
    provides cluster centroids and a hard spatial partition, but does not
    directly reconstruct a probability density.

    \item \textbf{Asymmetric evolutionary clustering.}
    The current clustering is regularized using information from previous
    observation times only as in Section~\ref{sec:second_model} (asymmetric regularization). This provides a causal evolutionary method.

    \item \textbf{Symmetric evolutionary clustering.}
    Temporal information is incorporated from both past and future
    observations as in Section~\ref{sec:second_model} (symmetric regularization). This non-causal formulation is particularly suitable for
    the offline analysis of complete time sequences.

    \item \textbf{Non-parametric MFG clustering.}
    Each component is represented by an evolving probability density rather
    than by a Gaussian distribution. The component densities satisfy a
    coupled Hamilton--Jacobi--Fokker--Planck system (see Section~\ref{sec:MFG}), with
    responsibility-based fidelity terms and periodic boundary conditions.
\end{enumerate}

The independent snapshot EM, the EM method applied to smoothed observations,
and the asymmetric and symmetric formulations are parametric Gaussian-mixture
approaches, since each component is described by a weight, a mean, and a
covariance matrix. Evolutionary \(k\)-means is instead a centroid-based
partitioning method with an explicit historical regularization, while the MFG
formulation is fully density-based and non-parametric. Thus, the comparison
includes both probabilistic Gaussian-mixture baselines and an established
non-EM evolutionary clustering method.

Recent stream-clustering methods discussed in the Introduction, such as
StreamDD and STM-Stream, are not included in the quantitative comparison because
they address a different observation model, namely sequential point streams with
concept drift and limited-memory processing. Evolutionary \(k\)-means is selected
here as the closest established history-regularized baseline that admits a
transparent adaptation to the present fixed-\(K\), spatial-density setting.

Figure~\ref{fig:test2-comparison} reports the original and temporally smoothed
data together with the outputs of all the clustering methods at four
representative times. For the density-based and Gaussian-mixture approaches,
the reconstructed total densities are displayed. Since Evolutionary
\(k\)-means does not provide a density reconstruction, its row displays the
three centroids and their Euclidean Voronoi partition.
For the Gaussian evolutionary computations in this test, the diffusion
coefficient is \(\varepsilon=1\) and the temporal averaging window is
\(\tau=0.5\). The EM baseline applied to temporally smoothed observations
uses \(\sigma=0.02\). The remaining numerical parameters are specified
below or are unchanged from the corresponding formulations of Test~1.

\begin{figure}[htbp]
    \centering
    \includegraphics[width=\textwidth]{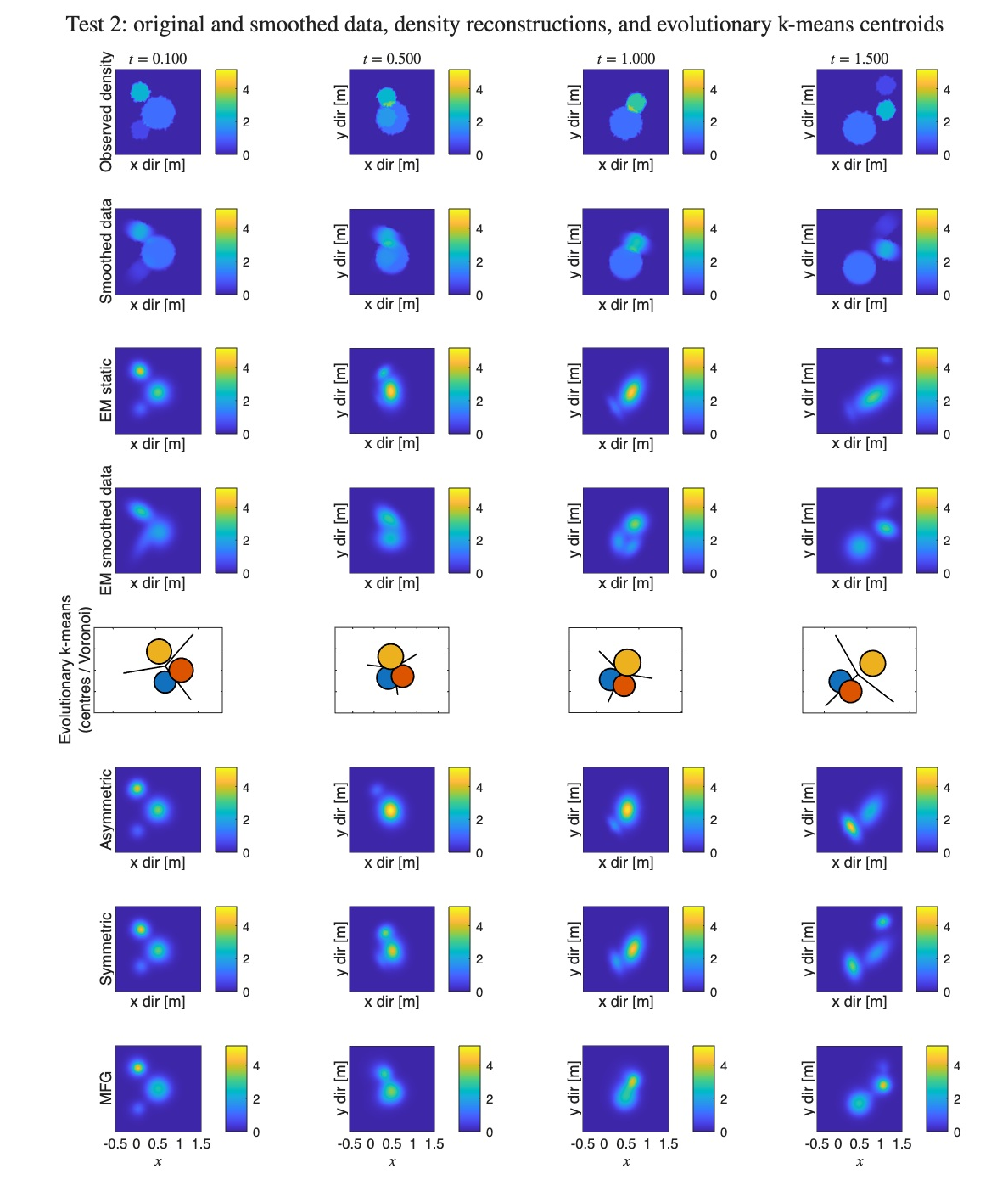}
    \caption{
    Test 2: comparison between the observed density, the temporally smoothed
    data, standard Gaussian-mixture approaches, Evolutionary \(k\)-means,
    and the proposed evolutionary methods. The columns correspond to
    \(t=0.1\), \(t=0.5\), \(t=1.0\), and \(t=1.5\).
    From top to bottom: original data, temporally smoothed data, independent
    snapshot EM, EM applied to smoothed observations, Evolutionary
    \(k\)-means, asymmetric evolutionary clustering, symmetric evolutionary
    clustering, and non-parametric MFG clustering. The Evolutionary
    \(k\)-means row reports centroids and Voronoi cells rather than a density
    reconstruction; the common colour scale therefore applies only to the
    density panels.
    }
    \label{fig:test2-comparison}
\end{figure}

The instantaneous EM reconstruction identifies the main regions of high
density but produces Gaussian components that are substantially more
concentrated than the original disk-shaped populations. Moreover, because the
clustering is recomputed independently at each time, the method does not
explicitly enforce consistency of component identities along the evolution.

Applying EM to the temporally smoothed observations regularizes the input
sequence before clustering and can reduce local frame-to-frame fluctuations. However, the preliminary
smoothing also modifies the spatial distribution of the observations. In
particular, nearby components may become more difficult to distinguish when
the temporal convolution causes their supports to overlap. As shown by the
quantitative indicators below, this preprocessing does not necessarily reduce
the global variation of the inferred centroid trajectories.

Evolutionary \(k\)-means tracks the three moving concentrations reasonably well
while they remain separated. Its historical term is designed to promote
consistency with the preceding clustering; nevertheless, in the present
experiment this does not translate into a smaller global trajectory variation.
A more difficult regime occurs when the second and third exact centres intersect. Indeed,
\(c_2(t)=c_3(t)\) at
\[
    t_\ast=\frac{1}{0.30+0.75}
    =\frac{20}{21}\simeq0.9524.
\]
Near and after this overlap, the hard \(k\)-means distortion criterion does
not enforce a one-to-one correspondence between centroids and physical
density modes. Moreover, in the evolutionary update two current centroids may
select the same closest predecessor. Consequently, a centroid may be
reallocated to a higher-mass region and one of the smaller-support
populations may subsequently be represented inaccurately. This behaviour
illustrates that historical consistency alone does not guarantee preservation
of component identity through a complete overlap.

The asymmetric evolutionary method provides greater temporal continuity than
instantaneous EM, but it may react with a delay to changes in the direction or
relative position of the populations. This effect is a consequence of its
causal formulation, since only past information is available when computing
the clustering at the current time.

The symmetric method gives a more regular reconstruction over the complete
time interval. The use of observations on both sides of the current time
reduces the delay associated with one-sided temporal regularization and helps
preserve the identity of the smaller components during close interactions.

The MFG method also produces temporally coherent clusters and preserves the
three-component structure throughout the simulation. Unlike the Gaussian
mixture approaches, its component densities are not restricted to an
elliptical parametric family. Consequently, the reconstructed populations may
adapt their shapes to the evolving observations.

\subsubsection{Quantitative comparison}

The visual comparison is complemented by the following indicators. Let
\(\widehat c_k^n\) be the reconstructed centre of the \(k\)-th population at
time \(t_n\), and let \(c_k(t_n)\) denote the corresponding exact centre.
Since the component labels may be permuted, at each time level we determine
the permutation \(\pi_n\) minimizing
\[
    E_c^n
    =
    \min_{\pi\in\mathcal S_3}
    \frac{1}{3}
    \sum_{k=1}^{3}
    \left\|
        \widehat c_k^n-c_{\pi(k)}(t_n)
    \right\|_2.
\]
We report both its time average
\[
    \overline E_c
    =
    \frac{1}{N_T}
    \sum_{n=1}^{N_T}E_c^n
\]
and its maximum value
\[
    E_c^{\max}
    =
    \max_{1\leq n\leq N_T}E_c^n.
\]

The accuracy of the reconstructed total mixture
\[
    \widehat f^n(x)
    =
    \sum_{k=1}^{3}
    \alpha_k^n m_k^n(x)
\]
is measured using the mean \(L^1\) error
\[
    \overline E_{L^1}
    =
    \frac{1}{N_T}
    \sum_{n=1}^{N_T}
    \int_\Omega
    \left|
        f(x,t_n)-\widehat f^n(x)
    \right|\,dx,
\]
and the mean Hellinger distance
\[
    \overline H
    =
    \frac{1}{N_T}
    \sum_{n=1}^{N_T}
    \left[
        \frac{1}{2}
        \int_\Omega
        \left(
            \sqrt{f(x,t_n)}
            -
            \sqrt{\widehat f^n(x)}
        \right)^2
        dx
    \right]^{1/2}.
\]

The \(L^1\) and Hellinger discrepancies are not reported for Evolutionary
\(k\)-means because the method returns centroids and a hard partition rather
than a probability-density reconstruction. For this baseline,
\(\overline E_c\), \(E_c^{\max}\), and \(E_{\mathrm{TV}}\) are computed
directly from the inferred centroid trajectories using the same permutation
criterion adopted for the other methods.

To quantify temporal regularity, we also consider the total variation of the
matched centre trajectories,
\[
    E_{\mathrm{TV}}
    =
    \sum_{n=1}^{N_T-1}
    \sum_{k=1}^{3}
    \left\|
        \widehat c_k^{n+1}
        -
        \widehat c_k^n
    \right\|_2.
\]
Finally, the wall-clock time is reported in order to compare the additional
cost associated with temporal regularization and with the solution of the
MFG system.

\begin{table}[htbp]
    \centering
    \caption{
    Quantitative comparison for Test 2. Lower values are better for all
    error and trajectory-variation indicators.
   A dash indicates a quantity that is not available for
    the corresponding method in the present comparison. In particular,
    \(L^1\) and Hellinger discrepancies are not defined natively for the
    centroid-based Evolutionary \(k\)-means baseline.
    }
    \vspace{0.5cm}
    \label{tab:test2-comparison}
    \begin{tabular}{lcccccc}
        \toprule
        Method
        & \(\overline E_c\)
        & \(E_c^{\max}\)
        & \(\overline E_{L^1}\)
        & \(\overline H\)
        & \(E_{\mathrm{TV}}\)
        & Wall time [s]
        \\
        \midrule
        Static EM
        & 0.1960 & 0.4934 & 0.6654 & 0.3544 & 11.5388 & 230.00 \\
        EM on smoothed data
        & 0.1246 & 0.2958 & 0.4956 & 0.3276 & 12.9685 &221.00\\
        Evolutionary \(k\)-means
        &0.2156
        & 0.4641
        & --
        & --
        &17.9285
        & 0.33\\
        Asymmetric evolutionary
        & 0.2467 & 0.3305 & 0.8293 & 0.4293 & 4.2375 & 320.00 \\
        Symmetric evolutionary
        & 0.1207 & 0.2063 & 0.6208 & 0.3405 & 3.6645 &612.00\\
        Non-parametric MFG
        & 0.1793 & 0.4259 & 0.6608 & 0.3627 & 3.2538 &651.94 \\
        \bottomrule
    \end{tabular}
\end{table}

Table~\ref{tab:test2-comparison} is intended to distinguish three different
properties of the algorithms. The centre errors assess their ability to track
the individual populations, the \(L^1\) and Hellinger errors evaluate the
quality of the reconstructed total density, while
\(E_{\mathrm{TV}}\) measures the temporal regularity of the identified
trajectories. These criteria are complementary: a method may accurately
approximate the total mixture while still exchanging component labels or
producing irregular centre trajectories.
For Evolutionary \(k\)-means, the density-based indicators
are not available, so its comparison is restricted to localization,
trajectory regularity, and computational cost. In particular, a small
trajectory variation would not by itself imply successful tracking if two
centroids have collapsed onto the same density mode.

The results reveal a clear distinction between density reconstruction and
component tracking. EM applied to smoothed observations gives the smallest
mean \(L^1\) and Hellinger discrepancies, showing that temporal
preprocessing improves the approximation of the total density. However, in the present experiment this preprocessing does not reduce the
total variation of the centroid trajectories relative to independent snapshot
EM, since \(E_{\mathrm{TV}}=12.9685\) compared with \(11.5388\) for the
static baseline. Its centre paths also remain considerably less regular than
those obtained with the proposed asymmetric, symmetric, and MFG formulations.

The Evolutionary \(k\)-means baseline provides reasonably accurate centroid localization
during the initial part of the evolution, when the three modes are spatially
distinct. Its performance deteriorates around and after the intersection of
the second and third populations at \(t_\ast\simeq0.9524\). This is consistent
with the hard-partition nature of the method: the historical penalty promotes
continuity of the clustering but does not enforce persistent one-to-one
component identities when two modes coincide. The resulting centroid
reallocations are also reflected in the relatively large trajectory variation,
\(E_{\mathrm{TV}}=17.9285\), the largest value among the methods considered in
this test. The comparison therefore
highlights a distinction between temporal regularization of a partition and
explicit tracking of evolving mixture components.

The symmetric evolutionary method gives the smallest mean and maximum
centre errors. The use of past and future information reduces causal delay
and improves the localization of the individual populations.

The non-parametric MFG method gives the smallest trajectory variation,
which is close to the variation of the exact centre paths. Its density and
centre errors are not the smallest, but it provides the most regular
non-parametric evolution without imposing Gaussian component shapes. This comes at the
price of a higher computational cost.

\paragraph{A posteriori assessment of the covariance approximation.}
We finally assess numerically both the reduced covariance approximation
introduced in Proposition~\ref{prop:log_lik2} and the positivity condition
required by the affine-drift construction. For the time-averaged models,
the exact covariance maximizer is
\[
    \Delta_k^\star(t)
    =
    \widetilde\Sigma_k(t)+R_k(t),
\]
where \(R_k\) represents the covariance generated by the motion of the
component centroid inside the temporal averaging window. In addition to its
spectral norm, we consider the relative correction
\[
    r_k(t)
    =
    \frac{\|R_k(t)\|_2}
         {\|\widetilde\Sigma_k(t)\|_2}.
\]
In Test~2, the asymmetric regularization gives
\[
    \max_{k,t}\|R_k(t)\|_2
    =
    1.3695\times10^{-3},
    \qquad
    \max_{k,t} r_k(t)
    =
    9.345\%,
\]
while the maximum relative correction in the overlap region
\(|t-t_\ast|\leq0.1\), with \(t_\ast=20/21\), is \(7.690\%\).
For the symmetric regularization,
\[
    \max_{k,t}\|R_k(t)\|_2
    =
    2.8902\times10^{-3},
    \qquad
    \max_{k,t} r_k(t)
    =
    36.491\%,
\]
and the latter maximum is attained in the overlap region. Thus,
\(R_k\) remains small in absolute norm for both regularizations, whereas its
relative contribution may become non-negligible for the symmetric model
during strong component interaction. This confirms that
\(\Delta_k\simeq\widetilde\Sigma_k\) should be interpreted as a
timescale-separated reduced approximation rather than as a uniformly
negligible correction.

We also evaluate the positivity condition entering the Sylvester equation,
using the same backward difference employed in the numerical scheme,
\[
    Q_k^n
    =
    2\varepsilon I
    -
    \frac{\Sigma_k^n-\Sigma_k^{n-1}}{\Delta t},
\]
with \(\Sigma_k\) replaced by \(\widetilde\Sigma_k\) for the time-averaged
models. For the reported two-dimensional Gaussian computations,
\(\varepsilon=1\). The sufficient thresholds
\[
    \varepsilon_{\min}
    =
    \frac12
    \max_{k,n}
    \lambda_{\max}(\dot\Sigma_k^n)
\]
are \(0.0701\) and \(0.0630\) for the asymmetric and symmetric
regularizations, respectively. Accordingly,
\(Q_k^n\succ0\) at every valid time level for both methods, with minimum
eigenvalues approximately \(1.860\) and \(1.874\), respectively.
Temporal averaging therefore regularizes not only the centroid trajectories
but also the covariance derivatives entering the affine drift.

The instantaneous model behaves differently. Owing to the much sharper
variations of the estimated covariance matrices near component
interactions, its corresponding threshold is approximately \(15.97\),
which exceeds the value \(\varepsilon=1\) used in the computation.
Consequently, the sufficient positivity condition is satisfied at
\(99.44\%\) of the valid discrete time levels but is violated at a small
number of isolated steps, with
\(\min_{k,n}\lambda_{\min}(Q_k^n)\simeq-29.94\).
Hence the hypotheses of Proposition~\ref{prop:equal_data} are verified
uniformly by the two temporally regularized computations but not by the
instantaneous run. This observation is consistent with the greater
sensitivity of the instantaneous covariance estimates to rapid variations
and provides an additional numerical motivation for temporal averaging.

\subsection{Test 3: seasonal clustering of east Australian
humpback-whale movements}
\label{subsec:test3-whales}

The third experiment investigates the application of the proposed
evolutionary clustering methods to a real animal-movement dataset. The data
were obtained from the Movebank Data Repository and are described by
Andrews-Goff et al.~\cite{AndrewsGoff2023Data,
AndrewsGoff2023Whales}. The dataset contains satellite-derived locations of
east Australian humpback whales, \emph{Megaptera novaeangliae}, belonging
to a unique breeding stock, noted E1. Satellite tags were deployed on 50 individuals
between 2008 and 2010 during northward and southward migrations and on
Antarctic feeding grounds.

The original data package contains both Argos-derived locations and
model-based position estimates. To avoid combining different
representations of the same tracks, we retain only visible observations
that are not marked as algorithmic outliers and are identified as modelled
positions. This preprocessing yields 13\,311 locations associated with
48 individuals.

The selected tracks cover a large geographical region, extending from the
tropical waters of eastern Australia to Antarctic feeding grounds. Some
trajectories also cross the international date line.

The study in~\cite{AndrewsGoff2023Whales} identifies a pronounced seasonal
organization of the movements. Tagged whales were generally located near
their northern limit in July and near their southern limit in March.
Northward migration was most rapid during May and June, whereas the largest
southward displacements occurred during November and December. Tagged
animals were located on Antarctic feeding grounds predominantly between
January and May and approached the northern breeding grounds between July
and August. This seasonal structure makes the dataset suitable for testing
dynamical soft-clustering methods.

\subsubsection{Construction of the seasonal densities}

The observations are pooled according to calendar month, independently of
the year of acquisition. We thus construct a month-of-year seasonal
sequence of twelve probability densities,
\[
    f^n:\Omega\longrightarrow\mathbb{R}_+,
    \qquad n=1,\ldots,12,
\]
where \(\Omega\subset\mathbb{R}^2\) is the projected spatial domain.

The resulting sequence is a pooled seasonal representation and not the
chronological trajectory of a fixed set of individuals. Observations
belonging to different years and different animals may contribute to
consecutive monthly densities. For the purpose of temporal regularization,
the sequence is treated as periodic, so that December and January are
regarded as consecutive states.

The number of available positions differs considerably between individuals.
Direct kernel-density estimation using all locations with equal point
weights would therefore give greater importance to animals whose tags
transmitted more frequently. To reduce this effect, every individual active
during a given month is assigned the same total mass.

Let \(\mathcal I_n\) denote the set of individuals observed during month
\(n\), and let \(R_{i,n}\) be the number of retained positions of individual
\(i\) during that month. The monthly density is defined by
\[
    f^n(x)
    =
    \frac{1}{|\mathcal I_n|}
    \sum_{i\in\mathcal I_n}
    \frac{1}{R_{i,n}}
    \sum_{r=1}^{R_{i,n}}
    \mathcal K_{\gamma_{\mathrm{KDE}}}
    \left(x-X_{i,n,r}\right),
    \label{eq:whale-monthly-density}
\]
where \(X_{i,n,r}\) is the projected position of the \(r\)-th observation
and
\[
    \mathcal K_{\gamma}(z)
    =
    \frac{1}{2\pi\gamma^2}
    \exp\left(
        -\frac{\|z\|_2^2}{2\gamma^2}
    \right).
\]
The bandwidth used in the present computation is
\[
    \gamma_{\mathrm{KDE}}=280\,\mathrm{km}.
\]
Each monthly field is subsequently normalized so that
\[
    \int_\Omega f^n(x)\,dx=1.
\]

This weighting corrects for differences in the number of fixes contributed
by each active animal within a month. It does not correct for variation in
deployment design, tag duration, or the number and composition of active
individuals. Consequently, \(f^n\) represents the normalized distribution
of the available tagged-whale locations and not the abundance density of
the complete breeding stock.

For months without usable observations, a density is obtained by circular
linear interpolation between the nearest observed monthly fields and is
then normalized to unit mass. Such fields are regarded as numerically
imputed observations and are distinguished from directly observed months
when evaluating data-fidelity indicators.

Because the trajectories cover a large geographical region and may cross
the international date line, latitude and longitude are not treated as
Cartesian coordinates. The observations are mapped through a Lambert
azimuthal equal-area projection centred at
\(42^\circ\mathrm{S}\), \(150^\circ\mathrm{E}\). The use of an equal-area
projection is particularly convenient for the construction and
normalization of spatial probability densities.

The projected domain is discretized on a \(54\times54\) grid. A common
characteristic length is then used to nondimensionalize both spatial
coordinates while preserving the aspect ratio of the projected domain.
The inverse projection is employed for the final georeferenced
visualizations.

We seek a decomposition
\[
    f^n(x)
    \simeq
    \widehat f^n(x)
    =
    \sum_{k=1}^{K}
    \alpha_k^n m_k^n(x),
    \qquad K=3,
    \label{eq:whale-mixture}
\]
where
\[
    \alpha_k^n\geq0,
    \qquad
    \sum_{k=1}^{K}\alpha_k^n=1,
    \qquad
    \int_\Omega m_k^n(x)\,dx=1.
\]

The number of components is fixed a priori to \(K=3\) to provide a
parsimonious representation of the dominant spatial modes. The densities
\(m_k^n\) are interpreted as mathematical modes of the pooled tagged-whale
locations. They may capture recurrent concentrations associated with
breeding areas, migratory corridors, or Antarctic feeding regions, but they
do not by themselves identify distinct biological populations,
behavioural states, or groups of individual animals.

\subsubsection{Results and interpretation}

Figure~\ref{fig:whale-seasonal-comparison} compares four representative
phases of the annual migration: November, January, April, and July. The first
row reports all retained state-space-model positions occurring in the
corresponding calendar month, pooled across the available years. The second
row shows the monthly kernel-density estimates. The third and fourth rows
contain the reconstructions obtained with the symmetric method and the
MFG method, respectively.

\begin{figure}[htbp]
    \centering
    \includegraphics[width=\textwidth]
    {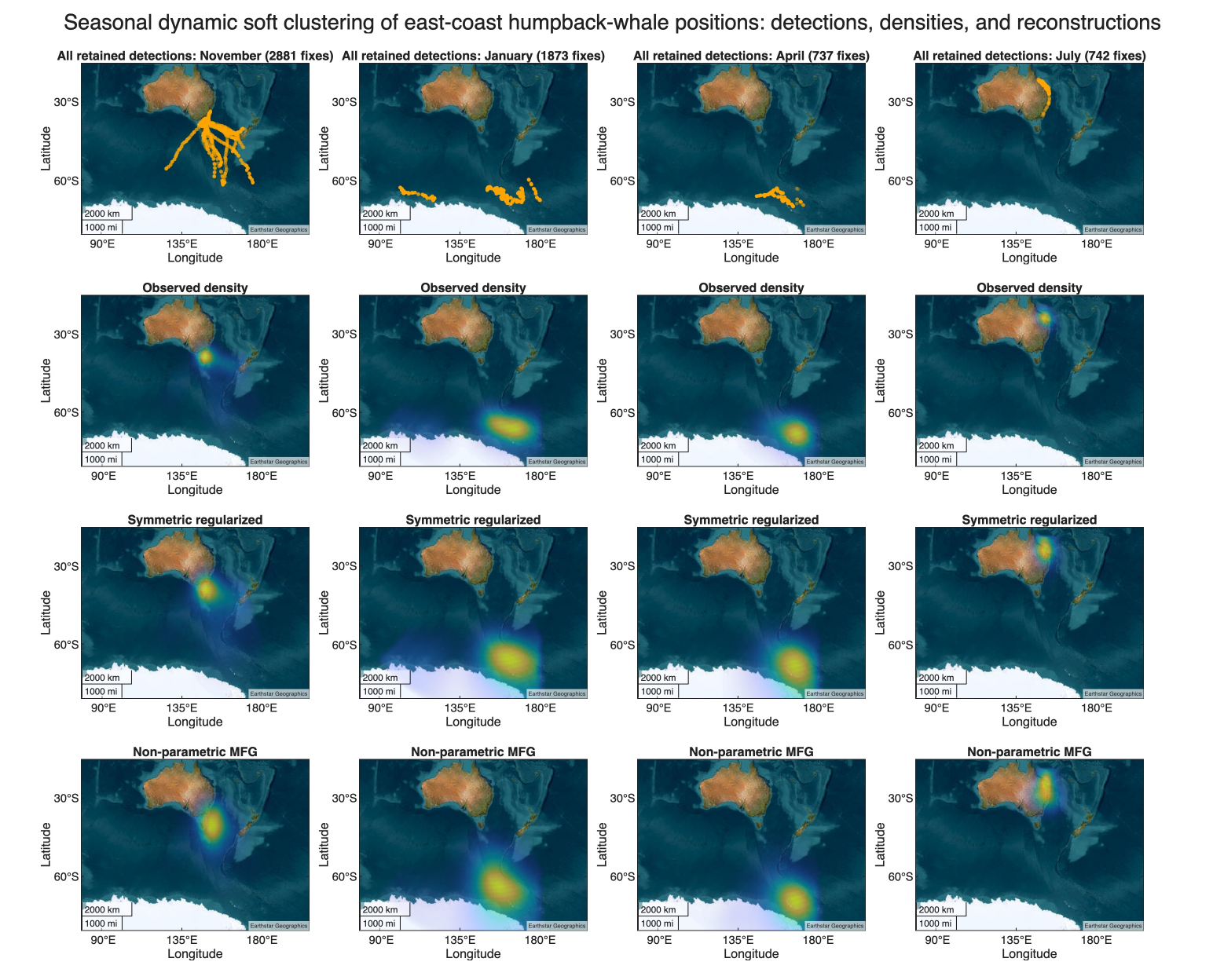}
    \caption{
    Seasonal dynamic soft clustering of east Australian humpback-whale
    movements. Columns correspond to November, January, April, and July.
    From top to bottom: retained state-space-model locations, observed
    monthly kernel-density estimate, symmetric regularized reconstruction,
    and non-parametric MFG reconstruction.
    All panels use the same geographical window,
    \(72^\circ\mathrm{S}\)--\(10^\circ\mathrm{S}\) and
    \(105^\circ\mathrm{E}\)--\(180^\circ\mathrm{E}\).
    Density overlays use continuous transparency, with zero density rendered
    completely transparent.
    }
    \label{fig:whale-seasonal-comparison}
\end{figure}

The raw observations exhibit a clear seasonal displacement. During the
austral summer, the principal density is concentrated in the Southern Ocean
and close to Antarctic feeding grounds. During autumn and early winter, the
distribution moves northward towards the eastern Australian coast. By July,
the dominant spatial mass is located near the northern part of the observed
range. This qualitative evolution agrees with the seasonal migration
described in~\cite{AndrewsGoff2023Whales}.

The symmetric method produces spatially regular components and benefits from
using information from both adjacent months. It is particularly effective at
suppressing irregular changes caused by months with few observations. Its
non-causal nature, however, means that the reconstructed component at a given
month is influenced by both earlier and later observations.

The MFG method provides a sequential evolution of
non-parametric component densities along the periodic monthly sequence. The entropic regularisation and continuity intrinsically contained in the model is important for preserving
component identity in presence of sparse data during the long displacements between Australia and
Antarctica. 

The inferred barycentre trajectories are shown in
Figure~\ref{fig:whale-centre-trajectories}. They provide a compact
representation of the annual motion of the three soft components, consistently with the seasonal pattern reported in \cite{AndrewsGoff2023Whales} although
they should not be interpreted as trajectories of individual whales.

\begin{figure}[htbp]
    \centering
    \includegraphics[width=\textwidth]
    {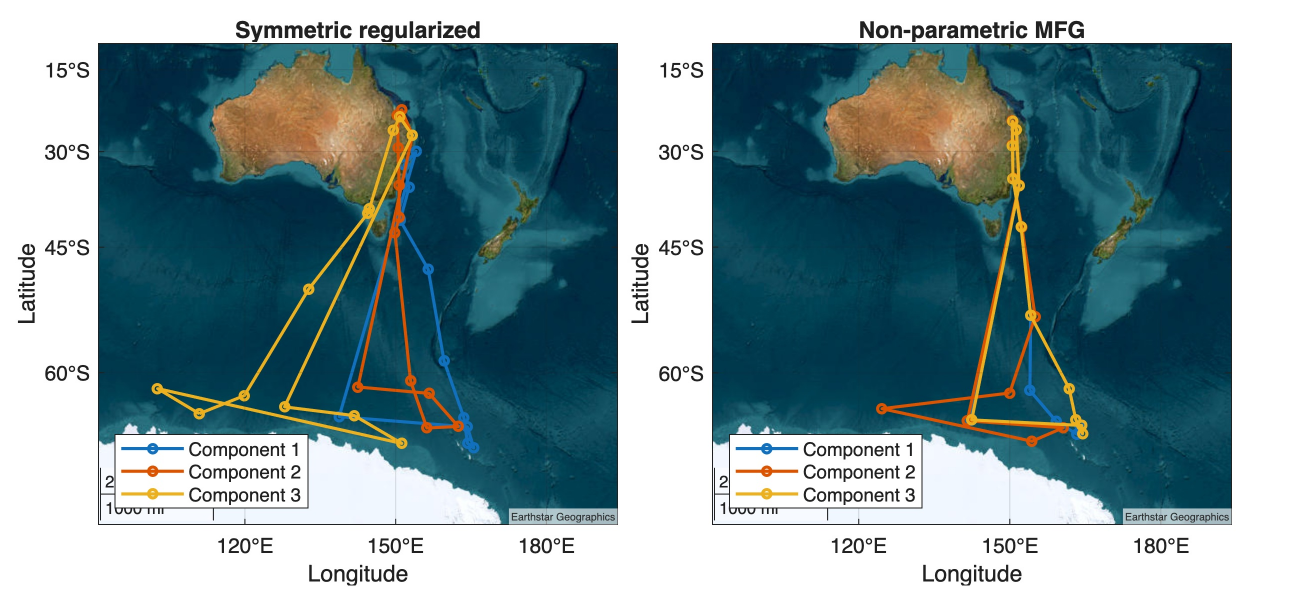}
    \caption{
    Annual trajectories of the component barycentres obtained with the
    symmetric regularized method and the MFG method.
    The plotted curves describe the motion of spatial density modes and not
    the paths of individual tagged animals.
    }
    \label{fig:whale-centre-trajectories}
\end{figure}

This experiment demonstrates that the proposed methods can be applied to
large-scale ecological movement data without assuming Gaussian component
shapes. The symmetric approach is well suited to retrospective seasonal analysis,
whereas the MFG formulation provides a sequential dynamical
interpretation of the reconstructed seasonal density modes. Nevertheless, the resulting clusters describe
statistical modes of the pooled observations. They should not be interpreted
as evidence of distinct biological subpopulations without additional
ecological, genetic, or behavioural information.

\section{Conclusions}
\label{sec:conclusions}

In this work, we have introduced a mean field game framework for
evolutionary soft clustering. The central idea is to represent each cluster
by a probability density whose evolution is governed by a controlled
Fokker--Planck equation, while the associated velocity field is determined
through a stationary Hamilton--Jacobi problem. The interaction between the
component densities, the complete reconstructed mixture, and the observed
data is encoded through a variational coupling. This construction provides
a continuous-time, mass-preserving, and control-theoretic interpretation of
evolutionary clustering.

A first contribution of the paper is the connection between this general
density-based formulation and classical Gaussian-mixture estimation. By
restricting the component densities to Gaussian profiles and selecting an
appropriate affine drift, we have shown that their means, covariance
matrices, and mixture weights reproduce the continuous-time counterparts of
the parameters obtained through the Expectation--Maximization procedure.
The resulting correspondence gives a dynamical interpretation of EM and
clarifies how finite-dimensional mixture estimation can be embedded into a
population-evolution model.

Within the Gaussian setting, we have also introduced asymmetric and
symmetric time-averaged log-likelihood functionals. The asymmetric
regularization uses past observations only and is therefore compatible with
causal or online processing, although it may introduce a tracking delay. The
symmetric formulation uses observations on both sides of the current time
and is more appropriate for retrospective reconstruction of a complete
sequence. These two alternatives distinguish temporal regularization of the
clustering variables from the simpler strategy of smoothing the observations
before applying an otherwise independent snapshot algorithm.

The numerical experiments confirm that these distinctions are relevant in
practice. In the one-dimensional test with moving and intersecting
rectangular profiles, no single method is uniformly preferable with respect
to all indicators. Snapshot and instantaneous approaches adapt rapidly to
the current observation but may exhibit oscillations or component-label
exchanges near intersections. Temporal regularization improves the
coherence of the reconstructed trajectories, with the symmetric formulation
providing the most accurate and regular Gaussian reconstruction in the
reported experiment. The non-parametric MFG method achieves a comparably
regular evolution while retaining component shapes that are not constrained
to remain Gaussian.

The two-dimensional benchmark further separates reconstruction accuracy
from component tracking. Applying EM to temporally smoothed observations
improves the approximation of the total density, but it does not enforce
continuity of the individual component identities.
The additional comparison with Evolutionary \(k\)-means
shows that historical regularization of centroid positions does not by
itself guarantee preservation of weak component identities during strong
overlap: in this test, the centroid-based method tracks the separated
populations reasonably well at early times but deteriorates near and after
the intersection of the two smaller components.
The symmetric evolutionary method gives the most accurate localization
of the moving populations, whereas the non-parametric MFG formulation
produces the smallest trajectory variation, close to that of the prescribed
exact paths.
The a posteriori covariance check further shows that the
temporal-centroid remainder \(R_k\) stays below \(10\%\) of the reduced
within-component covariance for the asymmetric regularization, whereas the
relative correction can reach about \(36.5\%\) for the symmetric model
during the strongest overlap. This confirms that the reduced covariance
update is a timescale-separated approximation and should not be interpreted
as uniformly exact in strongly interacting regimes.

The real-data experiment demonstrates the applicability of the proposed
ideas to a large-scale ecological movement dataset. Monthly probability
densities were constructed from modelled locations of east Australian
humpback whales by pooling observations across years and balancing the
contribution of the active tagged individuals. The symmetric grid-based
method and a MFG formulation both recover a pronounced
seasonal displacement between eastern Australia and the Southern Ocean.

The whale-data example also illustrates the care required when interpreting
the inferred clusters. The monthly fields combine observations from
different years and different tagged animals, and the resulting component
densities represent statistical spatial modes of the pooled locations. They
may be associated with recurrent concentration regions, migratory corridors,
or feeding and breeding areas, but they do not by themselves identify
biological subpopulations, behavioural states, or trajectories of individual
animals.

Taken together, the experiments show that the main advantage of the MFG
formulation is structural rather than uniformly superior snapshot accuracy.
The component densities can evolve without belonging to a prescribed
parametric family, mass is conserved by the Fokker--Planck dynamics, and the
velocity fields arise from an explicit optimization principle. This
flexibility comes at a substantially higher computational cost, because a
stationary nonlinear Hamilton--Jacobi problem and a Fokker--Planck update
must be solved for each component and time level.

Several questions remain open. From the analytical point of view, it would
be valuable to establish well-posedness and stability results for the
nonlinear quasi-stationary multi-population system and to characterize the
conditions under which different components remain identifiable. A
convergence and error analysis of the Hamilton--Jacobi--Fokker--Planck
discretization would also provide a firmer basis for the numerical
comparisons.

From the modelling perspective, future work should investigate systematic
criteria for choosing the number of components and the coupling parameters,
as well as the design of component-specific attraction and repulsion terms
that preserve non-parametric flexibility without requiring problem-dependent
tracking corrections. For irregularly sampled real data, uncertainty in the
observations, missing time intervals, unequal tag duration, and variation in
the sampled individuals should be incorporated directly into the model
rather than handled only through preprocessing.

Finally, the computational cost motivates the development of scalable
solvers, including adaptive spatial discretizations, parallel
component-wise algorithms, multilevel Hamilton--Jacobi solvers, and reduced
or particle-based approximations. A genuinely online version of the method,
based on a single causal pass and uncertainty-aware updating, would be
particularly relevant for streaming applications. These developments would
help turn the proposed framework from a general modelling principle into a
practical tool for high-dimensional and large-scale evolutionary clustering.

\section*{Authorship contribution statement}

Alessio Basti: Conceptualization, Methodology, Formal analysis, Software, Validation, Investigation, Data curation, Visualization, Writing -- original draft, Writing -- review \& editing. Fabio Camilli: Conceptualization, Methodology, Formal analysis, Software, Validation, Investigation, Data curation, Visualization, Writing -- original draft, Writing -- review \& editing. Adriano Festa: Conceptualization, Methodology, Formal analysis, Software, Validation, Investigation, Data curation, Visualization, Writing -- original draft, Writing -- review \& editing. All authors contributed equally to this work.

\section*{Declaration of competing interest}

The authors declare that they have no known competing financial interests or personal relationships that could have influenced the work reported in this paper.

\section*{Acknowledgments}

This work was partially supported by GNAMPA 2025 project \textit{“Clustering dinamico e giochi a campo medio”}.

\section*{Data availability}

The data used in the real-data experiment are publicly available from the Movebank Data Repository through the sources cited in the manuscript. The numerical data and code supporting the findings of this study are available from the corresponding author upon reasonable request.


\appendix
\section{Proofs}\label{app:proofs}
\begin{proof}[Proof of Proposition \ref{prop: explicit_sol}]

	Replacing \eqref{eq:drift} in the FP equation in \eqref{emMFG}, we   obtain 
	\begin{equation}\label{FP1}
		\pd_t m_k(x,t)-	\e    \Delta m_k(x,t)-\diver(m_k(x,t)V_k(t)(x-M_k(t)))=0.
	\end{equation}
	We look for a non-negative normalized (probability density function) solution of such   equation. Let us assume an ansatz of the form
	\[
	m_k(x,t)= c_k(t)\exp\Bigl(-\frac{1}{2}(x-\nu_k(t))^\top \,T_k(t)^{-1}(x-\nu_k(t))\Bigr),
	\]
	where $c_k(t)$ is a scalar, $\nu_k(t)\in \mathbb{R}^d$ is the mean, and $T_k(t)$ is a symmetric, positive definite $d\times d$ matrix. The normalization condition
	\[
	\int_{\mathbb{R}^d} m_k(x,t)\,dx = c_k(t)\sqrt{(2\pi)^d \det T_k(t)} = 1,
	\]
	leads to 
	\(
	c_k(t)= 1/(\sqrt{(2\pi)^d \det T_k(t)}).
	\)
	By defining
	\[
	g_k(x,t)= -\frac{1}{2}z(x,t)^\top\,T_k(t)^{-1}z(x,t)\quad \text{with}\quad z(x,t):=x-\nu_k(t),
	\]
	the ansatz can be written as
	\(
	m_k(x,t)= c_k(t)\,e^{\,g_k(x,t)}.
	\)
	Differentiating in time,
	\[
	\partial_t m_k(x,t)= c'_k(t)e^{\,g_k(x,t)} + c_k(t)e^{\,g_k(x,t)}\partial_t g_k(x,t)
	= m_k(x,t)\Biggl[\frac{c'_k(t)}{c_k(t)} + \partial_t g_k(x,t)\Biggr].
	\]
	Since $T_k(t)$ and $\nu_k(t)$ depend on $t$, we compute
	\[
	\partial_t g_k(x,t)= -\frac{1}{2}\partial_t\Bigl(z(x,t)^\top\,T_k(t)^{-1}\,z(x,t)\Bigr).
	\]
	Noting that $\partial_t z(x,t) = -\nu'_k(t)$ and using the identity
	\[
	\frac{d}{dt}\,T_k(t)^{-1} = -T_k(t)^{-1}\,T'_k(t)\,T_k(t)^{-1},
	\]
	we obtain
	\[
	\begin{split}
		\partial_t g_k(x,t)
		& = -\frac{1}{2}\Bigl[(\partial_t z(x,t))^\top\,T_k^{-1}\,z(x,t) + z(x,t)^T\,\frac{d}{dt}\bigl(T_k^{-1}\bigr)\,z(x,t) + z(x,t)^\top\,T_k^{-1}\,(\partial_t z(x,t))\Bigr]\\[1mm]
		& = -\frac{1}{2}\Bigl[(-\nu'_k(t))^\top\,T_k^{-1}\,z(x,t) + z(x,t)^\top\,\bigl(-T_k^{-1}T'_kT_k^{-1}\bigr)\,z(x,t) + z(x,t)^\top\,T_k^{-1}\,(-\nu'_k(t))\Bigr]\\[1mm]
		& = \frac{1}{2}\Bigl[\nu'_k(t)^\top\,T_k^{-1}\,z(x,t) + z(x,t)^\top\,T_k^{-1}\,\nu'_k(t)\Bigr] + \frac{1}{2}\,z(x,t)^\top\,T_k^{-1}\,T'_k\,T_k^{-1}\,z(x,t)\\[1mm]
		& = z(x,t)^\top\,T_k^{-1}\,\nu'_k(t) + \frac{1}{2}\,z(x,t)^\top\,T_k^{-1}\,T'_k\,T_k^{-1}\,z(x,t),
	\end{split}
	\]
	where we used the symmetry of $T_k^{-1}$. Next,
	\[
	D m_k(x,t)= m_k(x,t)\,D g_k(x,t)= -m_k(x,t)\,T_k(t)^{-1}\,z(x,t).
	\]
	For the Laplacian term,
	\[
	\Delta m_k(x,t)= m_k(x,t)\Bigl[\Delta g_k(x,t)+\lvert D g_k(x,t)\rvert^2\Bigr],
	\]
	with
	\(
	\Delta g_k(x,t)= -\operatorname{tr}\bigl(T_k(t)^{-1}\bigr) \) and \( \lvert D g_k(t,x)\rvert^2= z(x,t)^\top\,T_k(t)^{-2}z(x,t).\)
	Thus,
	\[
	-\e\,\Delta m_k(x,t)= -\e m_k(x,t)\left[-\operatorname{tr}\bigl(T_k(t)^{-1}\bigr) + z(x,t)^\top\,T_k(t)^{-2}z(x,t)\right].
	\]
For the drift term,
\[
\operatorname{div}\bigl(m_k(x,t) V_k(t)(x-M_k(t))\bigr)
= m_k\operatorname{tr}(V_k(t)) + (x-M_k(t))^\top V_k(t)^\top D m_k(x,t).
\]
\[=m_k(x,t) (\operatorname{tr}(V_k(t)) - (x-M_k(t))^\top\,V_k(t)\,T_k(t)^{-1}z(x,t)).\]
	Inserting the expressions into the FP equation, and dividing by $m_k(x,t)$, we get
	\[
	\frac{c'_k(t)}{c_k(t)} + \partial_t g_k(x,t) + \e\,\operatorname{tr}\bigl(T_k(t)^{-1}\bigr) - \e\,z(x,t)^\top\,T_k(t)^{-2}z(x,t) \]\[- \operatorname{tr}\bigl(V_k(t)\bigr) + (x-M_k(t))^\top\,V_k(t)\,T_k(t)^{-1}z(x,t) = 0.
	\]
	By writing $x-M_k(t) = z(x,t)+(\nu_k(t)-M_k(t))$, and by relying on the above obtained expression for $\partial_t g_k(x,t)$, we rewrite the equation as
	\begin{align*}
		\frac{c'_k(t)}{c_k(t)} + z(x,t)^\top\,T_k^{-1}(t)\nu'_k(t) + \frac{1}{2}\,z(x,t)^\top\,T_k^{-1}(t)T'_k(t)\,T_k^{-1}(t)z(x,t) + \\
		\e\,\operatorname{tr}(T_k^{-1}(t)) - \e\,z(x,t)^\top\,T_k^{-2}(t)z(x,t) - \operatorname{tr}(V_k(t)) \\+ \Bigl[z(x,t)^\top\,V_k(t)T_k^{-1}(t)z(x,t) +( \nu_k(t)-M_k(t))^\top\,V_k(t)T_k^{-1}(t)z(x,t)\Bigr]= 0.
	\end{align*}
	Collecting terms according to their power in $z(x,t)$, the quadratic terms yield
	\begin{align*}
	\frac{1}{2}&\,z(x,t)^\top\,T_k^{-1}(t)T'_k(t)\,T_k^{-1}(t)z(x,t) - \e\,z(x,t)^\top\,T_k^{-2}(t)z(x,t)\\
	 &+ z(x,t)^\top\,V_k(t)T_k^{-1}(t)z(x,t) = 0.
	\end{align*}
	We firstly notice that this has to hold for every $z(x,t)$. Secondly, by multiplying on the left and right by $T_k$, we obtain 
	 \[
	T'_k(t)= 2\e\,I - T_k(t)\,V_k(t)-V_k(t)\,T_k(t).
	\]
	The linear terms in $z(x,t)$ yield
	\[
	T_k^{-1}(t)\nu'_k(t) + T_k^{-1}(t)\,V_k(t)(\nu_k(t)-M_k(t))=0,
	\]
	which implies
	\[
	\nu'_k(t)= -V_k(t)\Bigl(\nu_k(t)-M_k(t)\Bigr).
	\]
	Finally, the constant terms give
	\[
	\frac{c'_k(t)}{c_k(t)} + \e\,\operatorname{tr}(T_k^{-1}(t)) - \operatorname{tr}(V_k(t)) = 0.
	\]
	This relation is automatically verified by differentiating 
	\[
	c_k(t)= \frac{1}{\sqrt{(2\pi)^d\det T_k(t)}},
	\]
	indeed, by exploiting the identity \(\frac{d}{dt}\ln\det T_k(t)= \operatorname{tr}\Bigl(T_k(t)^{-1}\,T'_k(t)\Bigr)\), and \(
	T'_k(t)= 2\e\,I - T_k(t)\,V_k(t)-V_k(t)\,T_k(t)
	\), we have
	\[
	\frac{c'_k(t)}{c_k(t)}=-\frac{1}{2}\frac{d}{dt}\ln\det T_k(t)
	= -\frac{1}{2}\operatorname{tr}\Bigl(T_k(t)^{-1}\,T'_k(t)\Bigr)=\operatorname{tr}\bigl(V_k(t)\bigr)- \e\,\operatorname{tr}\bigl(T_k(t)^{-1}\bigr).
	\] 
\end{proof}
\begin{proof}[Proof of Prop. \ref{prop:equal_data}]
By the definition of \(V_k(t)\) through the Sylvester equation,
\[
\Sigma_k(t)V_k(t)+V_k(t)\Sigma_k(t)
=
2\varepsilon I-\Sigma_k'(t).
\]
Equivalently,
\[
\Sigma_k'(t)
=
2\varepsilon I-\Sigma_k(t)V_k(t)-V_k(t)\Sigma_k(t),
\]
which is precisely the matrix differential equation satisfied by \(T_k(t)\).
Since \(T_k(0)=\Sigma_k(0)\), uniqueness for the corresponding linear
matrix differential equation yields
\[
T_k(t)=\Sigma_k(t)
\qquad\text{for all }t\in[0,T].
\]
Moreover, from
\[
M_k(t)=\mu_k(t)+V_k(t)^{-1}\mu_k'(t)
\]
we obtain
\[
\mu_k'(t)
=
-V_k(t)\bigl(\mu_k(t)-M_k(t)\bigr).
\]
Thus, \(\mu_k\) satisfies the same differential equation as \(\nu_k\).
Since \(\nu_k(0)=\mu_k(0)\), uniqueness gives
\[
\nu_k(t)=\mu_k(t)
\qquad\text{for all }t\in[0,T].
\]
By \eqref{equal_mean}, it follows that
\begin{align*}
	\E_{m(\cdot,t)}[X]&=\sum_{k=1}^K\a_k(t)\E_{m_k(\cdot,t)}[X]=\sum_{k=1}^K\a_k(t) \mu_k(t)\\
	&=\sum_{k=1}^K \a_k(t)\frac{\int_{\R^d}x\gamma_{k}(x,t)f(x,t)dx}{\a_k(t)} 
	=\int_{\R^d} xf(x,t)dx=\E_{f(\cdot,t)}[X]
\end{align*}
since $\sum_{k=1}^K \gamma_{k}(x,t)=1$. \\
To prove  $\Var_{m(\cdot,t)}[X] =\Var_{f(\cdot,t)}[X] $, we first claim that  
\begin{equation}\label{eq_moment}
	\int_{\R^d } xx^\top m(x,t) dx = \int_{\R^d} xx^\top f(x,t) dx.
\end{equation} 
Indeed, recalling that $T_k=\Sigma_k$,  multiplying by $\alpha_k=\int_{\mathbb{R}^d}\g_k(x)f(x)dx $        \eqref{emmfg_var}   and summing over $k$, by the identity
\begin{align*}
	&\sum_{k=1}^{K} \alpha_k \int_{\mathbb{R}^d}(x- \mu_k(t))(x-\mu_k(t))^\top m_k (x,t) dx\\
	 &= \sum_{k=1}^{K} \alpha_k (t)\dfrac{\int_{\R^d }(x - \mu_k(t))(x-\mu_k(t))^\top \gamma_k (x,t) f(x,t) dx}{\int_{\mathbb{R}^d}\g_k(x,t)f(x,t)dx}, 
\end{align*}
we easily get  
\begin{equation*}
	\int_{\R^d} xx^\top m(x,t) dx - \sum_{k=1}^{K}\alpha_k(t) \mu_k(t)\mu_k^\top(t)= \int_{\R^d} xx^\top f(x,t) dx - \sum_{k=1}^{K}\alpha_k(t)\mu_k(t)\mu_k^\top(t) 
\end{equation*}
and therefore the claim.	
Hence, by the $\E_{m(\cdot,t)}[X]=\E_{f(\cdot,t)}[X]=:\mu(t)$, we get
\begin{align*}
	\Var_{m(\cdot,t)}[X] =\int_{\R^d} xx^\top m(x,t) dx-\mu(t)\mu(t)^\top \\
	=\int_{\R^d} xx^\top f(x,t) dx- \mu(t)\mu(t)^\top =\Var_{f(\cdot,t)}[X] .
\end{align*}
\end{proof}
\begin{proof}[Proof of Prop. \ref{prop:log_lik_1} and \ref{prop:log_lik2}]
We work with a general kernel \(g\) to prove \ref{prop:log_lik2}. 
The aim is to find the components of the mixture that maximize \[
\cL_g=\sum_{k=1}^K
\int_{\R}g(t-s)\int_{\R^d}\gamma_k(x,s)\ln\bigl[\beta_k\,\mathcal N(x\mid\rho_k,\Delta_k)\bigr]\,f(x,s)\,dx\,ds.
\]
We write explicitly
		\[
		\ln\bigl[\beta_k\,\mathcal N(x\mid\rho_k,\Delta_k)\bigr]
		=\ln\beta_k -\tfrac d2\ln(2\pi)-\tfrac12\ln\det \Delta_k -\tfrac12(x-\rho_k)^T \Delta_k^{-1}(x-\rho_k),
		\] and introduce multiplier $\lambda$ for $\sum_k\beta_k=1$:
		\[
		L = \cL_g + \lambda\bigl(1-\sum_{k=1}^K\beta_k\bigr),
		\] we have three different optimization steps.\\
		Let us optimize w.r.t. $\beta_k$. Differentiating:
\[
\frac{\partial L}{\partial\beta_k}
=\int_{\R} g(t-s)\int_{\R^d}\gamma_k(x,s)\frac1{\beta_k}f(x,s)\,dx\,ds -\lambda =0\]
\[\Longrightarrow\;
\beta_k
=\frac{1}{\lambda}\int_{\R} g(t-s)\int_{\R^d}\gamma_k(x,s)f(x,s)\,dx\,ds.
\]
Imposing $\sum_k\beta_k=1$ gives
\[
\lambda=\sum_{j=1}^K\int_{\R} g(t-s)\int_{\R^d}\gamma_j(x,s)f(x,s)\,dx\,ds
\;=\;\int_{\R} g(t-s)\Biggl(\int_{\R^d} f(x,s)dx\Biggr)ds=1,
\]
so
\[\beta_k
=\displaystyle\int_{\R} g(t-s)\int_{\R^d}\gamma_k(x,s)f(x,s)\,dx\,ds.
\]
Only the quadratic term depends on $\rho_k$.  We have
\[
\frac{\partial L}{\partial\rho_k}
=-\frac12
\int_{\R} g(t-s)\frac{\partial}{\partial\rho_k}
\int_{\R^d}\gamma_k(x,s)(x-\rho_k)^\top \Delta_k^{-1}(x-\rho_k)\,f(x,s)\,dx\,ds
= 
\]
\[
=\int_{\R} g(t-s)\int_{\R^d}\gamma_k(x,s)\Delta_k^{-1}(x-\rho_k)f(x,s)\,dx\,ds=0
\;
\]
Hence
\[
\rho_k
=\frac{\displaystyle\int_{\R} g(t-s)\,\int_{\R^d} x \gamma_k(x,s)f(x,s)\,dxds}{\displaystyle\int_{\R} g(t-s)\int_{\R^d}\gamma_k(x,s)f(x,s)\,dx\,ds}.
\]
		Finally, w.r.t. $\Delta_k$.
		Using matrix derivatives (see \cite{MatrixCookbook}, Eq. 57)
		\[
		\frac{\partial}{\partial \Delta_k}\ln\det \Delta_k = \Delta_k^{-1}  ,
		\]\[
		\frac{\partial}{\partial \Delta_k}(x-\rho_k)^\top \Delta_k^{-1}(x-\rho_k)
		= -\Delta_k^{-1}(x-\rho_k)(x-\rho_k)^\top \Delta_k^{-1} \quad\textrm{(see \cite{MatrixCookbook}, Eq. 61)}
		\]
we have
		\[
		\frac{\partial L}{\partial \Delta_k}
		=-\frac{1}{2}\int_{\R} g(t-s)\int_{\mathbb{R}^d}\gamma_k(x,s)\bigl[\Delta_k^{-1} - \Delta_k^{-1}(x-\rho_k)(x-\rho_k)^\top \Delta_k^{-1}\bigr]f(x,s)\,dxds=0.
		\]
By multiplying on left and right by $\Delta_k$, 
\[
0=\int_{\R} g(t-s)\int_{\R^d}\gamma_k(x,s)\bigl[(x-\rho_k)(x-\rho_k)^\top-\Delta_k\bigr]f(x,s)\,dx\,ds
\;
\]
so, by adding and subtracting $\mu_k(s)$ inside the inner integral, we obtain
\begin{equation}\label{eq_cov_max}
\begin{split}
\Delta_k
&=\frac{\displaystyle\int_{\R} g(t-s)\,\int_{\R^d} (x-\rho_k)(x-\rho_k)^\top  \gamma_k(x,s)f(x,s)\,dxds}{\displaystyle\int_{\R} g(t-s)\int_{\R^d}\gamma_k(x,s)f(x,s)\,dx\,ds} \\[8pt]
&= \frac{\displaystyle\int_{\R} g(t-s)\alpha_k(s)\Bigl[ \Sigma_k(s) + \bigl(\mu_k(s)-\rho_k\bigr)\bigl(\mu_k(s)-\rho_k\bigr)^\top \Bigr] ds}{\displaystyle\int_{\R} g(t-s)\alpha_k(s)\,ds} \\[8pt]
&= \frac{\bigl(g * (\alpha_k\Sigma_k)\bigr)(t)}{\bigl(g * \alpha_k\bigr)(t)} + \frac{\Bigl(g * \bigl(\alpha_k(\mu_k-\rho_k)(\mu_k-\rho_k)^\top\bigr)\Bigr)(t)}{\bigl(g * \alpha_k\bigr)(t)}.
\end{split}
\end{equation}
At the optimum with respect to the mean parameter, we have
\[
\rho_k=\widetilde\mu_k(t).
\]
Therefore, \eqref{eq_cov_max} gives the exact covariance maximizer
\[
\Delta_k^\star(t)
=
\widetilde\Sigma_k(t)+R_k(t),
\]
where
\[
R_k(t)
=
\frac{
\bigl(
g*
\bigl[
\alpha_k
(\mu_k-\widetilde\mu_k(t))
(\mu_k-\widetilde\mu_k(t))^\top
\bigr]
\bigr)(t)
}{
(g*\alpha_k)(t)
}.
\]
In particular, \(R_k(t)\) is positive semidefinite.

To estimate this term, introduce the probability measure
\[
dP_{k,t}(s)
=
\frac{g(t-s)\alpha_k(s)}
{(g*\alpha_k)(t)}\,ds.
\]
Then
\[
\widetilde\mu_k(t)
=
\int \mu_k(s)\,dP_{k,t}(s)
\]
and the standard covariance identity gives
\[
R_k(t)
=
\frac12
\iint
\bigl(\mu_k(s)-\mu_k(r)\bigr)
\bigl(\mu_k(s)-\mu_k(r)\bigr)^\top
\,dP_{k,t}(s)\,dP_{k,t}(r).
\]
Hence,
\[
\begin{aligned}
\|R_k(t)\|_2
&\leq
\frac12
\iint
\|\mu_k(s)-\mu_k(r)\|_2^2
\,dP_{k,t}(s)\,dP_{k,t}(r)\\
&\leq
\frac12 L_{\mu,k}(t)^2\tau^2,
\end{aligned}
\]
because the support of the kernel has diameter \(\tau\).

Finally, the covariance-dependent part of the objective is strictly
concave in the precision matrix \(P_k=\Delta_k^{-1}\). Therefore,
\(\Delta_k^\star=\widetilde\Sigma_k+R_k\) is the unique exact covariance
maximizer. The reduced choice
$
\Delta_k=\widetilde\Sigma_k
$
is associated with an error bounded by
\[
\|\Delta_k^\star-\widetilde\Sigma_k\|_2
\leq
\frac12 L_{\mu,k}(t)^2\tau^2.
\]
\end{proof}
\begin{proof}[Proof of Prop. \ref{prop:coeffmixt_kernel}]
The identities $\nu_k(t)=\widetilde\mu_k(t)$ and $T_k(t)=\widetilde\Sigma_k(t)$ follow from the same uniqueness argument used in the proof of
Proposition~\ref{prop:equal_data}, with \(\mu_k\) and \(\Sigma_k\) replaced by
\(\widetilde\mu_k\) and \(\widetilde\Sigma_k\), respectively, and using the
corresponding initial conditions.
\[
\begin{aligned}
\E_{m(\cdot,t)}[X]
&=\sum_{k=1}^K\tilde\a_k(t)\,\tilde\mu _k(t)=\sum_{k=1}^K\tilde\a_k(t)\,\frac{(g*(\mu_k\,\alpha_k))(t)}{\tilde\a_k(t)}=\\
&=\sum_{k=1}^K\int_{\R}g(t-s)\int_{\R^d}x\,\gamma_k(x,s)\,f(x,s)\,dx\,ds=\\
&=\int_{\R}g(t-s)\int_{\R^d}x\Biggl(\sum_{k=1}^K\gamma_k(x,s)\Biggr)\,f(x,s)\,dx\,ds=\\
&=\int_{\R}g(t-s)\int_{\R^d}x\,f(x,s)\,dx\,ds
=\bigl(g*\E_{f}[X]\bigr)(t).
\end{aligned}
\]
\[
\E_{m(\cdot,t)}[XX^T]
\;-\sum_{k=1}^{K} \tilde\alpha_k(t)\,\tilde\mu_k(t)\,\tilde\mu_k(t)^T= \]
\[
= \sum_{k=1}^{K} \tilde\alpha_k(t) \int_{\mathbb{R}^d}(x-\tilde\mu_k(t))(x-\tilde\mu_k(t))^T \,m_k(x,t)\,dx= \sum_{k=1}^{K} \tilde\alpha_k(t)\,\tilde\Sigma_k(t)=\]
\[= \sum_{k=1}^{K} \int_{\R} g(t-s)\Bigl[\,x x^T\,\gamma_k(x,s)\,f(x,s)\;-\;\mu_k(s)\mu_k(s)^T\!\int_{\mathbb{R}^d}\gamma_k(x,s)\,f(x,s)\,dx\Bigr] ds=\]
\[
= \bigl(g*\E_f[XX^T]\bigr)(t)\;-\;\bigl(g*\sum_k \alpha_k\,\mu_k\mu_k^T\bigr)(t).
\]
Therefore, the covariance under $m$ is
\begin{align*}
\operatorname{Cov}_{m(\cdot,t)}[X] &= \mathbb{E}_{m(\cdot,t)}[XX^\top] - \mathbb{E}_{m(\cdot,t)}[X]\mathbb{E}_{m(\cdot,t)}[X]^\top \\
&= \mathbb{E}_{m(\cdot,t)}[XX^\top] - \bigl(g*\mathbb{E}_f[X]\bigr)(t)\bigl(g*\mathbb{E}_f[X]\bigr)(t)^\top \\
&= \bigl(g*\mathbb{E}_f[XX^\top]\bigr)(t) - \bigl(g*\mathbb{E}_f[X]\bigr)(t)\bigl(g*\mathbb{E}_f[X]\bigr)(t)^\top \\
&\quad + \sum_{k=1}^{K} \tilde\alpha_k(t)\,\tilde\mu_k(t)\,\tilde\mu_k(t)^\top - \Bigl(g*\sum_{k=1}^{K} \alpha_k\,\mu_k\mu_k^\top\Bigr)(t).
\end{align*}
\end{proof}

\section{Computational approach and discretization}
\label{sec:computational-approach}

We describe the numerical procedures used for the parametric Gaussian
formulation and for the fully non-parametric mean-field-game formulation.
Throughout this section, \(k=1,\ldots,K\) denotes the component index,
\(n=0,\ldots,N_T\) the physical-time index, \(i=1,\ldots,M\) the
spatial-grid index, \(q\) the outer fixed-point iteration, and \(r\) the
pseudo-time iteration used to solve the stationary Hamilton--Jacobi
equation.

The observed probability density at time \(t_n\) is denoted by
\[
    f^n(x):=f(x,t_n),
\]
whereas \(m_k^n\) and \(\alpha_k^n\) denote, respectively, the density and
the mixture weight of the \(k\)-th component.

\paragraph{Parametric Gaussian formulation.}

In the Gaussian setting, the numerical method follows an EM-type
estimation--evolution procedure. At each physical time level, the
responsibilities and the Gaussian parameters are first updated in an
E-step. The component densities are then advanced through the corresponding
Fokker--Planck equations in an M-step.

\begin{enumerate}

\item[\textbf{0.}]
\textbf{Space and time discretization.}

The time interval \([0,T]\) is partitioned uniformly as
\[
    t_n=n\Delta t,
    \qquad n=0,\ldots,N_T,
    \qquad T=N_T\Delta t.
\]
The spatial domain \(\Omega\subset\mathbb{R}^d\) is partitioned into
cells \(C_i\), \(i=1,\ldots,M\). We denote by
\[
    \Delta x:=\max_{1\leq i\leq M}\operatorname{diam}(C_i)
\]
the characteristic mesh size, by \(x_i\) the quadrature point associated
with \(C_i\), and by \(\Delta V_i=|C_i|\) its volume.
The observed and component densities are represented by their values at the
cell barycentres.

\item[\textbf{1.}]
\textbf{Initialization by static soft clustering.}

The initial component densities and mixture weights are computed by the
static soft-clustering method introduced in~\cite{accd}. Starting from a
non-degenerate initial assignment, the responsibilities
\(\gamma_{k,i}^0\) are used to evaluate
\[
    \alpha_k^0
    =
    \sum_{i=1}^{M}     \Delta V_i
    \gamma_{k,i}^0 f_i^0,
\quad
    \mu_k^0
    =
    \frac{1}{\alpha_k^0}
    \sum_{i=1}^{M}     \Delta V_i
    x_i\gamma_{k,i}^0 f_i^0,
\]
and
\[
    \Sigma_k^0
    =
    \frac{1}{\alpha_k^0}
    \sum_{i=1}^{M} \Delta V_i
    (x_i-\mu_k^0)(x_i-\mu_k^0)^\top
    \gamma_{k,i}^0 f_i^0,
\]
where \(f_i^0=f(x_i,t_0)\).

The initial component density is then defined as the discretization of the
Gaussian distribution
\[
    m_k^0(x)
    =
    \mathcal{N}(x\mid\mu_k^0,\Sigma_k^0).
\]
The responsibilities, weights, means, and covariance matrices are updated
until convergence. This procedure provides an initialization that is
consistent with both the discrete observations and the continuous densities
used in the subsequent Fokker--Planck evolution, while avoiding degenerate
or poorly separated initial profiles.

\item[\textbf{2.}]
\textbf{E-step: update of the latent variables and Gaussian parameters.}

At time \(t_n\), the responsibilities are evaluated from the component
densities and weights available at the preceding time level:
\[
    \gamma_{k,i}^{n}
    =
    \frac{
        \alpha_k^{n-1}m_k^{n-1}(x_i)
    }{
        \displaystyle
        \sum_{\ell=1}^{K}
        \alpha_\ell^{n-1}m_\ell^{n-1}(x_i)
    }.
    \label{eq:discrete-gaussian-responsibilities}
\]
The mixture weights, means, and covariance matrices are then approximated
by the quadrature formulas
\[
    \alpha_k^n
    =
    \sum_{i=1}^{M}\Delta V_i
    \gamma_{k,i}^n f_i^n,
\quad
    \mu_k^n
    =
    \frac{1}{\alpha_k^n}
    \sum_{i=1}^{M}\Delta V_i
    x_i\gamma_{k,i}^n f_i^n,
    \label{eq:discrete-gaussian-means}
\]
and
\[
    \Sigma_k^n
    =
    \frac{1}{\alpha_k^n}
    \sum_{i=1}^{M}\Delta V_i
    (x_i-\mu_k^n)(x_i-\mu_k^n)^\top
    \gamma_{k,i}^n f_i^n,
    \label{eq:discrete-gaussian-covariances}
\]
where \(f_i^n=f(x_i,t_n)\).

These quantities determine the drift field \(B_k^n\) entering the
Fokker--Planck equation in the subsequent M-step. To construct the drift, the time derivatives of the Gaussian moments are
approximated by backward differences,
\[
    \dot{\mu}_k^n
    \approx
    \frac{\mu_k^n-\mu_k^{n-1}}{\Delta t},
    \qquad
    \dot{\Sigma}_k^n
    \approx
    \frac{\Sigma_k^n-\Sigma_k^{n-1}}{\Delta t}.
\]
For the time-averaged formulations, the same formulas are applied to the
regularized quantities
\(\widetilde{\mu}_k^n\) and \(\widetilde{\Sigma}_k^n\). The matrix \(V_k^n\) is obtained from the discrete Sylvester equation
\[
    \Sigma_k^n V_k^n
    +
    V_k^n\Sigma_k^n
    =
    2\varepsilon I-\dot{\Sigma}_k^n.
\]
When \(\Sigma_k^n\) and \(\dot{\Sigma}_k^n\) commute, this reduces to
\[
    V_k^n
    =
    \frac12
    (\Sigma_k^n)^{-1}
    \left(
        2\varepsilon I-\dot{\Sigma}_k^n
    \right).
\]
The centre of the affine drift is then
\[
    M_k^n
    =
    \mu_k^n
    +
    (V_k^n)^{-1}\dot{\mu}_k^n,
\]
and
\[
    B_k^n(x)
    =
    V_k^n(x-M_k^n).
\]

\item[\textbf{3.}]
\textbf{M-step: Fokker--Planck evolution.}

For each component, the density evolution is governed by
\[
    \partial_t m_k
    =
    \operatorname{div}\!\left(B_km_k\right)
    +
    \varepsilon\Delta m_k.
    \label{eq:gaussian-fp-general}
\]
The equation is approximated by the semi-Lagrangian scheme described
in~\cite{cs18}. In algebraic form, the update can be written as
\[
    \mathbf{m}_k^n
    =
    A_{\Delta x,\Delta t}^{\varepsilon}[B_k^n]
    \mathbf{m}_k^{n-1},
    \label{eq:gaussian-discrete-evolution}
\]
where
\[
    \mathbf{m}_k^n
    =
    \left(
        m_k^n(x_1),\ldots,m_k^n(x_M)
    \right)^\top
\]
and \(A_{\Delta x,\Delta t}^{\varepsilon}[B_k^n]\) is the discrete evolution
operator accounting for both transport by \(B_k^n\) and diffusion with
coefficient \(\varepsilon\).

\item[\textbf{4.}]
\textbf{Fixed-point procedure for the symmetric regularization.}

For the instantaneous and causally regularized models, the E-step and
M-step can be performed sequentially at each physical time level. The
symmetric temporal regularization is instead non-causal, since the update at
time \(t_n\) depends on configurations at both earlier and later times.
Consequently, the complete time-dependent solution must be determined
through an outer fixed-point iteration.

We denote the fixed-point iterate by the superscript \([q]\). Given
\[
    \left\{
        m_k^{n,[q]},\alpha_k^{n,[q]}
    \right\}_{k,n},
\]
the responsibilities and symmetrically averaged quantities are recomputed
over the entire interval, after which the E-step and M-step generate the
next iterate
\[
    \left\{
        m_k^{n,[q+1]},\alpha_k^{n,[q+1]}
    \right\}_{k,n}.
\]
The iteration is stopped when
\[
    \max_{\substack{k=1,\ldots,K\\n=0,\ldots,N_T}}
    \left|
        \alpha_k^{n,[q+1]}
        -
        \alpha_k^{n,[q]}
    \right|
    <
    \mathrm{tol}_{\mathrm{FP}},
    \label{eq:gaussian-fixed-point-stopping}
\]
where \(\mathrm{tol}_{\mathrm{FP}}>0\) is a prescribed tolerance.

Stabilization of the weights is used as a practical stopping criterion.
In the reported experiments, the variations of the means, covariance
matrices, and component densities were also monitored to verify convergence
of the complete state.

\end{enumerate}

\begin{remark}
In the algorithm we define a sequential first-order discretization: the
component configuration available at \(t_{n-1}\) is used to assign the
observation at \(t_n\), and the resulting moments determine the drift over
the subsequent evolution step.

In the Gaussian setting, explicitly solving the complete PDE system is not
strictly necessary for practical computations. Owing to the theoretical
equivalence established in the previous sections, the corresponding
finite-dimensional EM updates provide a substantially less expensive
alternative.

The numerical solution of the PDE system nevertheless serves two purposes:
it validates the continuous-time MFG interpretation and provides the
computational framework required for the non-parametric extension. In the
latter setting, the component densities are not restricted to prescribed
Gaussian shapes, and the coupled Hamilton--Jacobi--Fokker--Planck system must
therefore be solved explicitly.
\end{remark}

\paragraph{Non-parametric mean-field-game formulation}

In the non-parametric formulation, the component densities are not assumed
to belong to a prescribed statistical family. At each time level, the
complete Hamilton--Jacobi--Fokker--Planck system is solved for every
component.

The reconstructed mixture at time \(t_{n-1}\) is
\[
    m^{n-1}(x)
    =
    \sum_{\ell=1}^{K}
    \alpha_\ell^{n-1}m_\ell^{n-1}(x),
    \label{eq:test1-mfg-mixture}
\]
and the responsibilities are defined by
\[
    \gamma_k^n(x)
    =
    \frac{
        \alpha_k^{n-1}m_k^{n-1}(x)
    }{
        \displaystyle
        \sum_{\ell=1}^{K}
        \alpha_\ell^{n-1}m_\ell^{n-1}(x)
    }.
    \label{eq:test1-mfg-responsibilities}
\]

\begin{itemize}

\item[\textbf{0.}]\textbf{MFG coupling.}

The information-theoretic coupling introduced in
Section~\ref{sec:MFG} is given by
\[
    F_k^n(x)
    =
    \eta_k\log m_k^{n-1}(x)
    +
   (1-\eta_k)\alpha_k^{n-1}
    \log\left(
        \frac{m^{n-1}(x)}
             {f^n(x)}
    \right).
    \label{eq:test1-mfg-coupling}
\]
The first term is a component-dependent entropic contribution. The second
term is associated with the first variation of the reverse
Kullback--Leibler divergence
\[
    D_{\mathrm{KL}}
    (m\,\|\,f)
    =
    \int_\Omega
    m(x)
    \log\left(
        \frac{m(x)}
             {f(x)}
    \right)\,dx
\]
and acts as a fidelity term between the reconstructed mixture and the
observed density.

For numerical stability, every density appearing inside a logarithm or in
the denominator of the responsibilities is replaced by
\[
    \max\{m,\delta_{\mathrm{num}}\}, \hbox{ or }  \max\{f,\delta_{\mathrm{num}}\},
\]
where \(\delta_{\mathrm{num}}>0\) is a small numerical threshold. Moreover,
the spatial mean of \(F_k^n\) is subtracted before solving the
Hamilton--Jacobi equation. This modification affects only the ergodic
constant and leaves the optimal drift unchanged.

In the numerical experiments, the same coupling parameter is used for all
components, namely \(\eta_k\equiv\eta\).

\item[\textbf{1.}]\textbf{Stationary Hamilton--Jacobi equation.}

For the quadratic Hamiltonian
\[
    H(p)=\frac{1}{2}|p|^2,
\]
the stationary Hamilton--Jacobi problem for the \(k\)-th component is
\[
    -\varepsilon\Delta u_k^n
    +
    \frac{1}{2}\left|\nabla u_k^n\right|^2
    +
    \lambda_k^n
    =
    F_k^n
    \qquad\text{in }\Omega.
    \label{eq:test1-stationary-hj}
\]
Homogeneous Neumann boundary conditions are imposed:
\[
    \nabla u_k^n\cdot\boldsymbol{\nu}=0
    \qquad\text{on }\partial\Omega,
\]
where \(\boldsymbol{\nu}\) denotes the outward unit normal. The additive
indeterminacy of the value function is removed by imposing
\[
    \min_{x\in\Omega}u_k^n(x)=0.
\]

The stationary equation is solved through a pseudo-time iteration. The
diffusion term is treated implicitly, whereas the Hamiltonian and the
coupling are evaluated explicitly. In \(d\) dimensions, the monotone
Godunov approximation of the quadratic Hamiltonian is
\[
    \widehat H_i(U)
    =
    \frac{1}{2}
    \sum_{j=1}^{d}
    \left[
        \max\!\left(D_j^-U_i,0\right)^2
        +
        \min\!\left(D_j^+U_i,0\right)^2
    \right].
    \label{eq:test1-godunov}
\]
At pseudo-time iteration \(r\), the numerical update is
\[
    \left(
        I+\Delta r A_{\varepsilon}
    \right)
    U_k^{n,(r+1)}
    =
    U_k^{n,(r)}
    -
    \Delta r
    \left[
        \widehat H\!\left(U_k^{n,(r)}\right)
        -
        F_k^n
    \right],
    \label{eq:test1-hj-iteration}
\]
where \(A_\varepsilon\) denotes the finite-difference approximation of
the elliptic operator \(-\varepsilon\Delta\), including the homogeneous
Neumann boundary conditions.

After every pseudo-time iteration, the minimum of \(U_k^{n,(r+1)}\) is
subtracted. The iteration is terminated when
\[
    \left\|
        U_k^{n,(r+1)}
        -
        U_k^{n,(r)}
    \right\|_\infty
    <
    \mathrm{tol}_{\mathrm{HJ}}.
    \label{eq:hj-stopping-criterion}
\]
The additive shift removed at each iteration, divided by \(\Delta r\),
also provides an approximation of the ergodic constant \(\lambda_k^n\).

\item[\textbf{2.}]\textbf{Fokker--Planck update.}

For the quadratic Hamiltonian, the optimal velocity field is
\[
    b_k^n(x)
    =
    -\nabla u_k^n(x).
    \label{eq:mfg-optimal-velocity}
\]
The component density is advanced by solving
\[
    \frac{
        m_k^n-m_k^{n-1}
    }{\Delta t}
    -
    \varepsilon\Delta m_k^n
    +
    \operatorname{div}
    \left(
        b_k^n m_k^n
    \right)
    =
    0.
    \label{eq:test1-fp-discretization}
\]
The associated no-flux boundary condition is
\[
    \left(
        b_k^n m_k^n
        -
        \varepsilon\nabla m_k^n
    \right)
    \cdot\boldsymbol{\nu}
    =
    0
    \qquad\text{on }\partial\Omega.
\]

Both diffusion and transport are treated implicitly. The transport term is
approximated by a first-order upwind discretization selected according to the
sign of the numerical velocity. At every physical time step, this produces
one sparse linear system of dimension \(M\) for each component.

\item[\textbf{3.}]\textbf{Weights update.} The weights associated with the current observation are updated as
\[
    \alpha_k^n=
    \sum_{i=1}^{M}
    \Delta V_i\gamma_{k,i}^n f_i^n.
    \label{eq:np-mfg-weights}
\]
After the update, the weights are renormalized so that
\[
    \sum_{k=1}^{K}\alpha_k^n=1.
\]

\end{itemize}

Once the mixture, responsibilities, and coupling terms have been assembled,
the Hamilton--Jacobi and Fokker--Planck problems are independent with
respect to \(k\). The component-wise problems can therefore be solved in
parallel.


\end{document}